\let\oldmarginpar\marginpar
\renewcommand\marginpar[1]{\oldmarginpar[\raggedleft\footnotesize #1]%
{\raggedright\footnotesize #1}}
   \def\MR#1{}
\newcommand{\Z}{\mathbb{Z}}
\newcommand{\W}{\mathcal W}
\newcommand{\vol}{{\rm vol}}
\newcommand{\voct}{{v_{\rm oct}}}
\newcommand{\vtet}{{v_{\rm tet}}}
\renewcommand{\L}{\mathcal L}
\newcommand{\toF}{{\overset{F}{\longrightarrow}}}
\newcommand{\K}{\upkappa}
\newcommand{\In}{\text{in}}
\newcommand{\out}{\text{out}}
\newcommand{\E}{\mathcal E}
\theoremstyle{plain}
\newtheorem{theorem}{Theorem}[section]
\newtheorem{corollary}[theorem]{Corollary}
\newtheorem{lemma}[theorem]{Lemma}
\newtheorem{prop}[theorem]{Proposition}
\newtheorem*{namedtheorem}{\theoremname}
\newcommand{\theoremname}{testing}
\theoremstyle{definition}
\newtheorem{define}[theorem]{Definition}
\title[Determinant density and biperiodic alternating links]{Determinant density and\\ biperiodic alternating links}
\author[A.\ Champanerkar]{Abhijit Champanerkar}
\address{Department of Mathematics, College of Staten Island \& The Graduate Center, City University of New York, New York, NY}
\email{abhijit@math.csi.cuny.edu}
\author[I. \ Kofman]{Ilya Kofman}
\address{Department of Mathematics, College of Staten Island \& The Graduate Center, City University of New York, New York, NY}
\email{ikofman@math.csi.cuny.edu}
\begin{document}

\begin{abstract}
Let $\L$ be any infinite biperiodic alternating link.  We show that
for any sequence of finite links that F{\o}lner converges almost
everywhere to $\L$, their determinant densities converge to the Mahler
measure of the 2--variable characteristic polynomial of the toroidal
dimer model on an associated biperiodic graph.
\end{abstract}

\maketitle

\section{Introduction}

The determinant of a knot is one of the oldest knot invariants that
can be directly computed from a knot diagram.  For any knot or link $K$,
$$ \det(K) = |\det(M + M^T)| = |H_1(\Sigma_2(K);\Z)| = |\Delta_K(-1)|= |V_K(-1)|, $$ 
where $M$ is any Seifert matrix of $K$, $\Sigma_2(K)$ is the $2$--fold
branched cover of $K$, $\Delta_K(t)$ is the Alexander polynomial and
$V_K(t)$ is the Jones polynomial of $K$ (see, e.g., \cite{lickorish}).

In \cite{ckp:gmax}, with Jessica Purcell, we studied the volume and
determinant density of alternating hyperbolic links approaching the
infinite square weave $\W$, the biperiodic alternating link shown in
Figure~\ref{fig:weave}(a).  The {\em volume density} of a hyperbolic
link $K$ with crossing number $c(K)$ is defined as $\vol(K)/c(K)$, and
the {\em determinant density} of $K$ is defined as
$2\pi\log\det(K)/c(K)$.  The volume density is known to be bounded by
the volume of the regular ideal octahedron, $\voct \approx 3.66386$,
and the same upper bound is conjectured for the determinant density.
With a suitable notion of convergence of diagrams, called here {\em
  F{\o}lner convergence almost everywhere}, $K_n\toF\W$ as in
Definition \ref{def:folner_converge} below, we proved:

\begin{theorem}\cite{ckp:gmax}\label{thm:gmax}
Let $K_n$ be any alternating hyperbolic link diagrams with no cycles of tangles such that $K_n\toF\W$.
Then $K_n$ is both geometrically and diagrammatically maximal:
$$ \lim_{n\to\infty}\frac{\vol(K_n)}{c(K_n)}= \lim_{n\to\infty}\frac{2\pi\log\det(K_n)}{c(K_n)}=\voct. $$
\end{theorem}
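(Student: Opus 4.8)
The plan is to establish the two equalities separately. The determinant density converges directly by an enumeration theorem, while the volume density is trapped between an elementary upper bound equal to $\voct$ and a lower bound of the form $\voct-o(1)$; this lower bound is the essential difficulty.

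\emph{Determinant density.} Since each $K_n$ is alternating, $\det(K_n)$ equals the number of spanning trees $\tau(G_n)$ of a checkerboard (Tait) graph $G_n$ of the diagram: the Goeritz matrix of a reduced alternating diagram is, up to sign, a reduced Laplacian of $G_n$, so this is the Matrix--Tree theorem, and $\tau$ is invariant under planar duality, so the choice of color is immaterial. The hypothesis $K_n\toF\W$ implies that the graphs $G_n$, with the uniform probability measure on their vertices, converge in the Benjamini--Schramm sense to the square lattice $\Z^2$, the Tait graph of $\W$. I would then quote Lyons' theorem on the asymptotic enumeration of spanning trees (the regularity it requires being supplied by the F{\o}lner convergence): along such a sequence, $\log\tau(G_n)/|V(G_n)|$ converges to the tree entropy of $\Z^2$, which equals the logarithmic Mahler measure of $4-z-z^{-1}-w-w^{-1}$, the characteristic polynomial of the discrete Laplacian on $\Z^2$ and --- via Temperley's bijection --- of the associated toroidal dimer model; this Mahler measure is $4G/\pi=\voct/\pi$, where $G$ is Catalan's constant and $\voct=4G$. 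Passing from vertices to crossings, each crossing is one edge of $G_n$, so $|E(G_n)|=c(K_n)$, and since the limit $\Z^2$ is $4$--regular, F{\o}lner convergence forces $|V(G_n)|=\tfrac12 c(K_n)(1+o(1))$. Hence
$$\frac{2\pi\log\det(K_n)}{c(K_n)}=\frac{\pi\log\tau(G_n)}{|V(G_n)|}\bigl(1+o(1)\bigr)\longrightarrow\pi\cdot\frac{4G}{\pi}=\voct.$$

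\emph{Volume density.} The upper bound is standard: the complement of an alternating link has a topological octahedral decomposition with one ideal octahedron per crossing, and straightening this decomposition shows $\vol(S^3\setminus K_n)\le\voct\,c(K_n)$, since each straightened ideal octahedron has volume at most $\voct$. For the lower bound, recall that the complement of $\W$ in the thickened torus carries a complete hyperbolic structure assembled from regular ideal octahedra, one for each crossing, so its volume per crossing is exactly $\voct$. Using $K_n\toF\W$, I would take a large F{\o}lner weave patch inside the diagram of $K_n$ and, in the octahedral decomposition of $S^3\setminus K_n$, assign to the octahedron of every crossing having a weave--like neighborhood the dihedral angles of the \emph{regular} ideal octahedron, and any admissible angles to the $o(c(K_n))$ remaining octahedra. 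By the Casson--Rivin principle (adapted to octahedral decompositions), the hyperbolic volume is at least the value of the volume functional at any such angle structure, and that functional is a sum over octahedra, each regular one contributing $\voct$ and each exceptional one a nonnegative amount; therefore $\vol(S^3\setminus K_n)\ge\voct\bigl(c(K_n)-o(c(K_n))\bigr)=(\voct-o(1))\,c(K_n)$. The two bounds force the limit to be $\voct$.

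The main obstacle is the construction in the previous paragraph: one must exhibit an honest angle structure --- with angle sums $2\pi$ around every edge of the octahedral decomposition and the cusp equations satisfied --- that is regular on all but a F{\o}lner--small set of octahedra. This requires knowing exactly how the octahedra fit together along the boundary of the weave patch and around the atypical crossings, which is where the hypotheses that the $K_n$ are hyperbolic, alternating, and have no cycles of tangles do their work, and the errors are $o(c(K_n))$ only because a F{\o}lner boundary is negligible compared with the bulk. Should the explicit angle structure prove unwieldy, an alternative is to bound $\vol(S^3\setminus K_n)$ from below by the Agol--Storm--W.~Thurston guts estimate applied to the checkerboard surface that cuts off the weave patch; in either approach it is the same F{\o}lner--boundary bookkeeping that converts the estimate into the stated limit.
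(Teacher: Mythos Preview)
This theorem is not proved in the present paper: it is quoted verbatim from \cite{ckp:gmax} as background for the main result here (Theorem~\ref{thm:main}), and no argument for it appears anywhere in the text. There is therefore nothing in this paper to compare your attempt against directly.

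That said, your determinant-density argument is essentially the one the authors develop here for the more general Theorem~\ref{thm:main}, specialized to $\L=\W$: pass to Tait graphs (their Lemma~\ref{lemma:Tait_folner}), control the edge/vertex ratio (their Lemma~\ref{lemma:ratio}), invoke Lyons' tree-entropy continuity (their Proposition~\ref{prop:lyons}), and identify the limit via the dimer model and Mahler measure (their Proposition~\ref{prop:entropy} and Theorem~\ref{thm:KOS}). Your numerics $h(\Z^2)=4G/\pi=\voct/\pi$ and $|V(G_n)|\sim\tfrac12 c(K_n)$ are correct, and the paper confirms that the original proof in \cite{ckp:gmax} also went through spanning-tree entropy of the square grid.

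For the volume density, the paper gives no hint of the argument in \cite{ckp:gmax}, so I cannot confirm whether your angle-structure route matches theirs. Your sketch is coherent in outline---the D.~Thurston octahedral upper bound is standard, and the Casson--Rivin inequality does give $\vol\ge V(\alpha)$ for any angle structure $\alpha$---but the step you flag as ``the main obstacle'' is genuinely the whole content: one must produce an honest angle structure on the octahedral decomposition that is regular away from a F{\o}lner-negligible set, and you have not indicated how the edge and cusp equations are satisfied across the interface between the weave patch and the rest of the diagram. Without that construction (or the alternative guts estimate you mention), the volume half remains a plan rather than a proof.
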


We define the volume density of $\W$ as $\vol(L)/c(L)$, where $L$ is
the finite toroidally alternating $\Z^2$--quotient link shown in
Figure~\ref{fig:weave}(b).  Here, $c(L)$ is the crossing number of the
reduced alternating projection of $L$ on the torus, which is minimal
by \cite{Adams:crossings}, and $\vol(L)=\vol(T^2\times I-L)$ is the
hyperbolic volume of its complement in the thickened torus $T^2\times
I$.  In this case, $c(L)=4$ and $\vol(L)=4\voct$ (see
\cite{ckp:gmax}). Hence, as $K_n\toF\W$, the volume densities of $K_n$
converge to the volume density of $\W$, $\vol(L)/c(L)=\voct$.

However, $\voct$ initially appears as a mysterious limit of the
determinant densities.  In \cite{ckp:gmax}, we proved that this limit
is the spanning tree entropy of the infinite square grid graph, which
is the planar projection graph of $\W$.  Just as in the case of volume
density, there is an analogous toroidal invariant of $\W$ that appears
as the limit of the determinant density.  This invariant is the
entropy of the toroidal dimer model on an associated biperiodic graph.

In this paper, we extend this diagrammatic result for $\W$ to {\em
  any} biperiodic alternating link $\L$.  We show that using the same
type of convergence of finite link diagrams as in Theorem
\ref{thm:gmax}, their determinant densities converge for any
biperiodic alternating link $\L$.  Moreover, we identify their limit
as the Mahler measure of the 2--variable polynomial arising from the
toroidal dimer model on a planar biperiodic graph.  Following the
definitions in the two subsections below, we present our main result
in Theorem~\ref{thm:main}.
\begin{figure}[h] 
\begin{tabular}{cc}
 \includegraphics[height=1.3in]{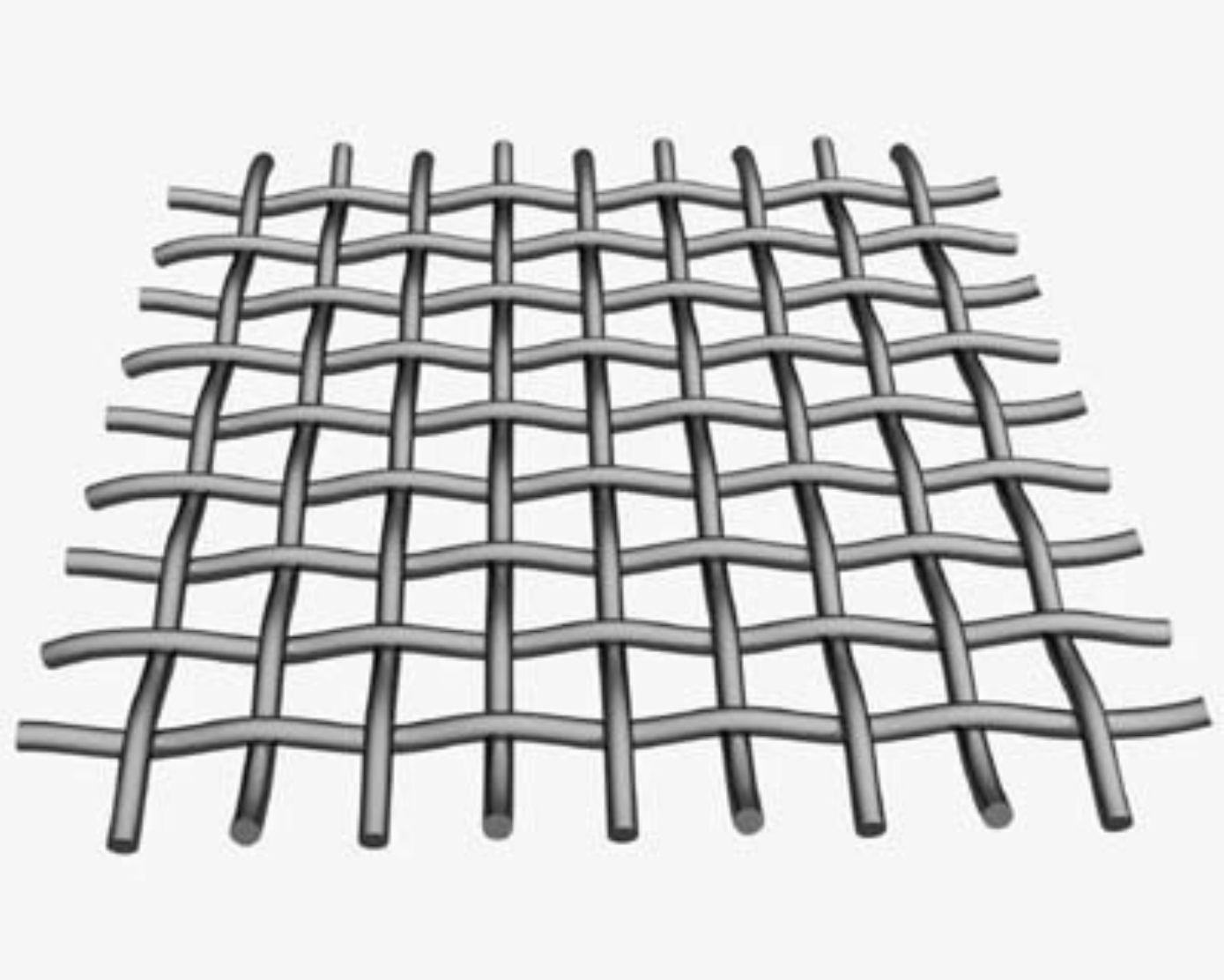} &  \quad \quad \quad \quad 
 \includegraphics[height=1in]{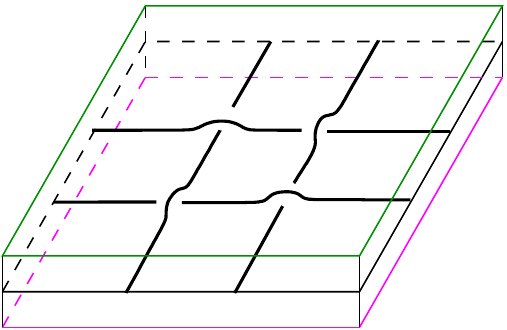}    \\
  (a) & \quad  \quad \quad \quad  (b)   \\
\end{tabular}
\caption{(a) Infinite weave $\W$. (b) Toroidally alternating quotient link $L$.}
\label{fig:weave}
\end{figure}

The main idea of the proof of Theorem~\ref{thm:main} is to relate the
limit of determinant densities of links approaching a biperiodic
alternating link $\L$ with the spanning tree entropy of a
corresponding planar graph $G_\L$, and to relate this, in turn, to the
entropy of the toroidal dimer model on a planar bipartite biperiodic
graph $G^b_\L$.  Thus, our main contribution is to bring some of the
beautiful new results from the asymptotics of the toroidal dimer model
to knot theory.

In Section~\ref{sec:definitions} we give several required definitions and then state our main theorem.
In Section~\ref{sec:dimer}, we discuss some aspects of the toroidal dimer model.
In Section~\ref{sec:examples}, we compute two examples that illustrate Theorem~\ref{thm:main}.
Finally, in Section~\ref{sec:proof}, we prove Theorem~\ref{thm:main}.

\section{Definitions and main result}\label{sec:definitions}

\subsection{F{\o}lner convergence of link diagrams}

The following notion of convergence of graphs is well known, but the
corresponding definition of convergence of link diagrams was
introduced in \cite{ckp:gmax}.  We say an infinite graph $G$ or an
infinite alternating link $\L$ is \emph{biperiodic} if it is invariant
under translations by a two-dimensional lattice $\Lambda$. 

\begin{define}\label{def:folner}
Let $G$ be any infinite graph. For any finite subgraph $G_n$,
the set $\partial G_n$ is the set of vertices of $G_n$ that share an edge
with a vertex not in $G_n$.  We let $|\cdot|$ denote the number of
vertices in a graph.  An exhaustive nested sequence of
connected subgraphs,
$\{G_n\subset G \mid \; G_n\subset G_{n+1},\; \bigcup_n G_n=G\}$, is a {\em F{\o}lner sequence} for $G$ if
\[ \lim_{n\to\infty}\frac{|\partial G_n|}{|G_n|}=0. \]
For a 
biperiodic planar graph $G$, we say $\{G_n\subset G\}$, is a {\em toroidal F{\o}lner sequence} for $G$
if it is a F{\o}lner sequence for $G$ such that $G_n\subset G\cap(n\Lambda)$.
\end{define}

\begin{define}\label{def:folner_converge_graphs}
Let $G$ be any biperiodic planar graph. We will say that a sequence of planar graphs $\Gamma_n$ {\em F{\o}lner converges almost everywhere} to $G$, denoted by $\Gamma_n\toF G$, if 
\begin{enumerate}
\item[($i$)] there are subgraphs $G_n\subset \Gamma_n$ that form a toroidal F{\o}lner sequence for $G$,
\item[($ii$)] $\lim\limits_{n\to\infty} |G_n|/|\Gamma_n| = 1$.
\end{enumerate}
\end{define}

\begin{define}\label{def:folner_converge}
We will say that a sequence of alternating links $K_n$ {\em F{\o}lner converges almost everywhere} to the biperiodic alternating link $\L$, denoted by $K_n\toF\L$,
if the respective projections graphs $\{G(K_n)\}$ and $G(\L)$ satisfy $G(K_n)\toF G(\L)$; i.e., the following two conditions are satisfied:
\begin{enumerate}
\item[($i$)] there are subgraphs $G_n\subset G(K_n)$ that form a toroidal F{\o}lner sequence for $G(\L)$,
\item[($ii$)] $\lim\limits_{n\to\infty} |G_n|/ c(K_n) = 1$.
\end{enumerate}
\end{define}

For example, below is a Celtic knot diagram that could be in a sequence $K_n\toF\W$:

\centerline{\includegraphics[height=0.9in]{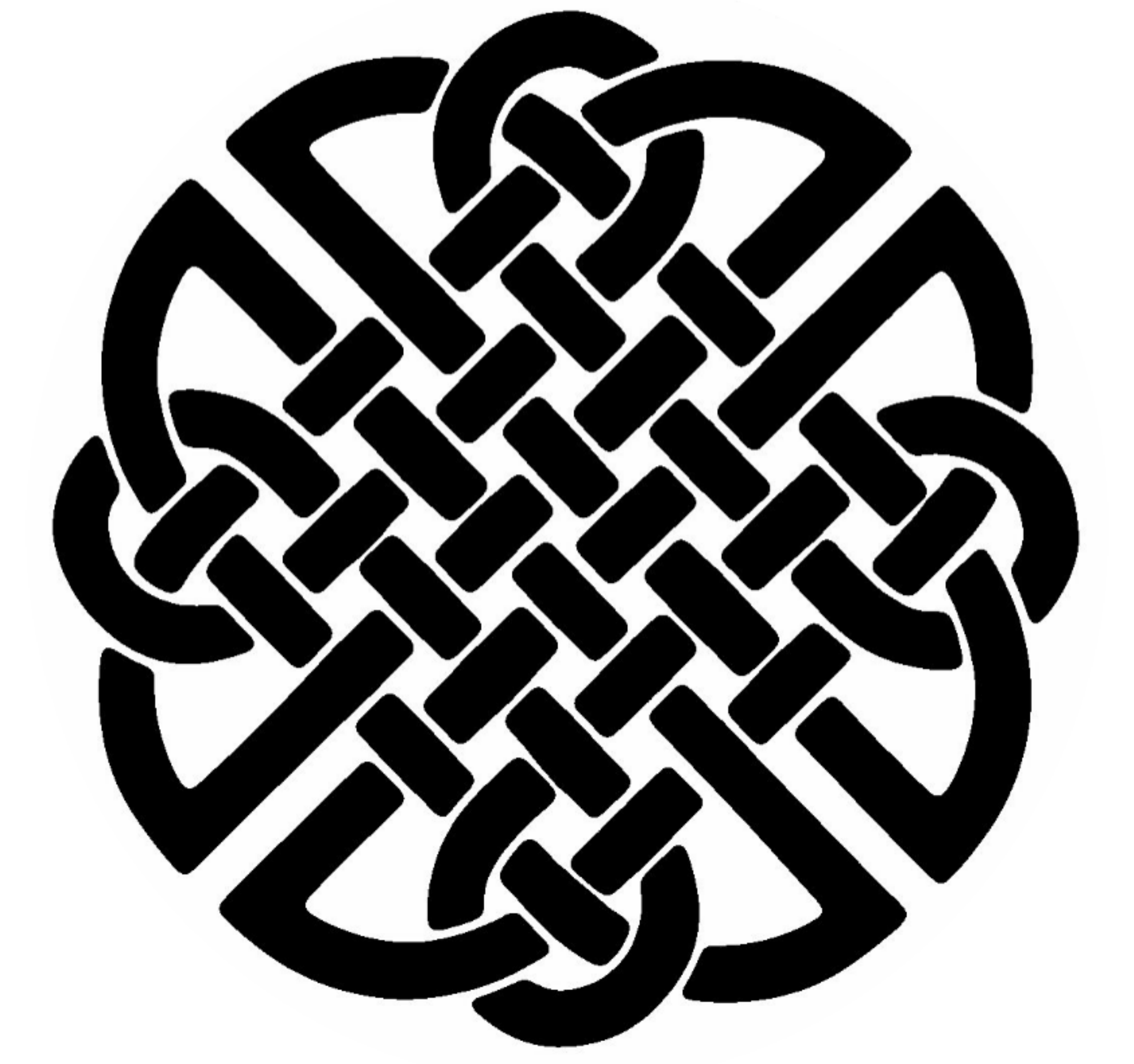}}

\noindent
As discussed in \cite{ckp:gmax, ckp:weaving}, weaving knots $W(p,q)$
with $p,q\to\infty$ provide examples of infinite sequences that F{\o}lner
converge almost everywhere to $\W$.  

\subsection{Mahler measure}
The Mahler measure of a polynomial $p(z)$ is defined as
$$ m(p(z)) =  \frac{1}{2\pi i}\int_{S^1} \log|p(z)|\;\frac{dz}{z} $$
It is a natural measure of complexity of polynomials that is additive under multiplication.
By Jensen's formula, $\displaystyle m(p(z)) = \sum_{i=1}^n\max \{\log|\alpha_i|,0 \}$, where $\alpha_1, \ldots, \alpha_n$ are roots of $p(z)$. 

The Mahler measure of a two-variable polynomial $p(z,w)$ is defined similarly:
 $$ m(p(z,w)) =\frac{1}{(2\pi i)^2}\int_{S^1\times S^1} \log|p(z,w)|\;\frac{dz}{z}\frac{dw}{w}.$$ 
 Unlike the one-variable case, two-variable Mahler measures are much
 harder to compute and exact values of $m(p(z,w))$ are known only for
 certain families of two-variable polynomials.

 Smyth's remarkable formula  below provided the
 first evidence of a deep relationship between the Mahler measure of
 two-variable polynomials and hyperbolic volume.  If $K$ is the
 figure-8 knot, $4_1$, then $\vol(K)=2\,\vtet$, where
 $\vtet\approx 1.01494$ is the hyperbolic volume of the regular ideal
 tetraheron.  Smyth \cite{smyth-mm-several} proved:
$$\displaystyle{2 \pi\, m(1+x+y) = \frac{3\sqrt{3}}{2}\,L(\chi_{{\text{\tiny -3}}},2) = \vol(K)}.$$
Later, Boyd and Rodriguez-Villegas \cite{BoydRV2002} related the
Mahler measure of $A$--polynomials of 1--cusped hyperbolic
3--manifolds to their hyperbolic volume.  
See the surveys  \cite{boyd-manifolds, smyth-survey} on the Mahler measure of one and two variable polynomials.

\subsection{Main result}

For any finite link diagram $K$, let $G(K)$ denote the projection
graph of $K$ as above, and let $G_K$ denote the Tait graph of $K$,
which is the planar checkerboard graph for which a vertex is assigned
to every shaded region and an edge to every crossing of $K$.  Using
the other checkerboard coloring yields the planar dual $G_K^*$.  Thus,
$e(G_K)=c(K)$.  Any alternating link $K$ is determined up to mirror
image by its Tait graph $G_K$.  Let $\tau(G)$ denote the number of
spanning trees of $G$.  By \cite{crowell}, for any connected reduced
alternating link diagram, 
$$\tau(G_K)=\det(K).$$

For a biperiodic alternating link $\L$, the projection
graph $G(\L)$ is biperiodic and can also be checkerboard
colored.  The two Tait graphs $G_{\L}$ and $G_{\L}^*$ are planar duals
and are both biperiodic.  We form the overlaid bipartite
graph $G_{\L}^b = G_{\L}\cup G_{\L}^*$ as follows: The black vertices
of $G^b$ are the vertices of $G_{\L}$ and of $G_{\L}^*$; the white
vertices of $G^b$ are points of intersection of their edges.  The
overlaid graph $G_{\L}^b$ is a biperiodic balanced bipartite graph;
i.e., the number of black vertices equals the number of white vertices
in a fundamental domain.  This makes it possible to define the
toroidal dimer model on $G_{\L}^b$, and in Section~\ref{sec:dimer} we
explain how to obtain the characteristic polynomial $p(z,w)$ of the
toroidal dimer model. The $\Lambda$--quotient link $L$ is the toroidal 
link $\L/\Lambda$. 

With these definitions, we can precisely state our main result: 

\begin{theorem}\label{thm:main}
Let $\L$ be any biperiodic alternating link, with toroidally alternating $\Lambda$--quotient link $L$. 
Let $p(z,w)$ be the characteristic polynomial of the toroidal dimer model on $G_{\L}^b$.
Then
$$ K_n\toF \L \quad \Longrightarrow \quad {\lim_{n\to\infty}\frac{\log\det(K_n)}{c(K_n)} = \frac{m(p(z,w))}{c(L)}}.$$ 
\end{theorem}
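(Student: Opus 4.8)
The plan is to route the determinant density through the spanning tree entropy of the Tait graph, and then to identify that entropy with the Mahler measure of the toroidal dimer model via a Temperley-type correspondence. By Crowell's theorem, $\log\det(K_n)=\log\tau(G_{K_n})$ and $c(K_n)=e(G_{K_n})$, so it suffices to understand $\log\tau(G_{K_n})/c(K_n)$. The hypothesis $K_n\toF\L$ supplies connected subgraphs $G_n\subset G(K_n)$ forming a toroidal F{\o}lner sequence for the projection graph $G(\L)$, with $|G_n|/c(K_n)\to 1$. Since the vertices of a projection graph are precisely the crossings of the diagram, and the crossings are precisely the edges of the Tait graph, each $G_n$ determines a connected subgraph $H_n$ of the Tait graph $G_{K_n}$ with $e(H_n)=|G_n|$, and $H_n$ is at the same time a sub-Tait-graph of the biperiodic graph $G_\L$.

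The first main step is to show $\log\tau(G_{K_n})/c(K_n)\to h(G_\L)/c(L)$, where $h(G_\L)$ is the spanning tree entropy of $G_\L$ per $\Lambda$--fundamental domain and $c(L)=e(G_L)$ is the number of edges of $G_L$ in a fundamental domain. On the one hand, $\{H_n\}$ is a toroidal F{\o}lner exhaustion of $G_\L$, so by the amenable case of the continuity of tree entropy (Lyons; established for the square grid in \cite{ckp:gmax}) one gets $\log\tau(H_n)/e(H_n)\to h(G_\L)/c(L)$. On the other hand, $G_{K_n}$ is obtained from $H_n$ by restoring the remaining $c(K_n)-|G_n|=o(c(K_n))$ crossings as edges, and because $G_\L$ has bounded degree each such addition changes $\log\tau$ by a bounded amount, since the effective conductance between the endpoints of a new edge is at most the degree of an endpoint; hence $\log\tau(G_{K_n})=\log\tau(H_n)+o(c(K_n))$. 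As $e(H_n)/c(K_n)=|G_n|/c(K_n)\to 1$, the two estimates combine to give the claim. This reduces the theorem to the purely graph-theoretic statement $h(G_\L)=m(p(z,w))$.

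The second main step, $h(G_\L)=m(p(z,w))$, is where the toroidal dimer model enters. Via the Temperley bijection, perfect matchings of the overlaid bipartite graph $G_\L^b=G_\L\cup G_\L^*$ correspond, up to finitely many exceptional configurations, to spanning trees of $G_\L$ (equivalently of $G_\L^*$), and on the level of biperiodic operators the Kasteleyn matrix of $G_\L^b$ encodes, after Fourier analysis on the torus, the Fourier-transformed Laplacian of $G_\L$. Consequently the toroidal dimer partition function and $\tau$ of a F{\o}lner exhaustion grow at the same exponential rate per fundamental domain, and Kenyon's theorem expressing that rate as the Mahler measure of the characteristic polynomial gives $m(p(z,w))=h(G_\L)$; combined with the first step this proves the theorem. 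I expect this last identification, carried out carefully on the torus, to be the main obstacle: the Temperley correspondence is not an exact bijection in the periodic case, and one must check that the exceptional matchings, together with the discrepancy between the behavior of $p(z,w)$ near $(z,w)=(1,1)$ and the zero mode of the Laplacian, do not change the exponential growth rate, so that the two Mahler measures coincide exactly. This is precisely where the recent asymptotic results on the toroidal dimer model do the heavy lifting, and packaging them into the clean equality $h(G_\L)=m(p(z,w))$ is the crux of the argument.
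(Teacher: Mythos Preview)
Your outline is essentially the paper's: route $\log\det(K_n)/c(K_n)$ through the spanning tree entropy of the Tait graphs, compare $\tau(G_{K_n})$ with $\tau(H_n)$ for a F{\o}lner patch $H_n\subset G_\L$, and then identify the limit with $m(p(z,w))$ via a Temperley-type bijection combined with the Kenyon--Okounkov--Sheffield theorem. Your second step, and your assessment of its subtlety on the torus, matches what the paper does in Proposition~\ref{prop:entropy} together with Theorem~\ref{thm:KOS}.

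The genuine gap is in your first step, in the claim that ``each such addition changes $\log\tau$ by a bounded amount, since the effective conductance between the endpoints of a new edge is at most the degree of an endpoint.'' For a connected graph $H$ and a new edge $e=uv$ between existing vertices, one has $\tau(H+e)/\tau(H)=1+R_{\mathrm{eff}}^{H}(u,v)$, so a bounded change in $\log\tau$ requires an upper bound on the effective \emph{resistance}. The inequality $C_{\mathrm{eff}}\le\deg$ yields $R_{\mathrm{eff}}\ge 1/\deg$, which is the wrong direction; it bounds the change from below, not above, and there is no uniform upper bound on $R_{\mathrm{eff}}^{H}(u,v)$ in general. Moreover, passing from $H_n$ to $G_{K_n}$ adds vertices as well as edges, and the degrees of those new Tait-graph vertices are not controlled by the bounded degree of $G_\L$: the portion of $K_n$ outside the F{\o}lner patch is unconstrained, so $G_{K_n}$ need not have bounded degree at all.

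The paper closes this gap by invoking Lyons's continuity result for tree entropy (\cite[Corollary~3.8]{lyons}, stated here as Proposition~\ref{prop:lyons}): if $\{G_{K_n}\}$ is a \emph{tight} sequence with bounded \emph{average} degree, and almost every vertex of $G_{K_n}$ has the same degree in $H_n$ as in $G_{K_n}$, then $\log\tau(G_{K_n})/|G_{K_n}|$ and $\log\tau(H_n)/|H_n|$ have the same limit. Verifying these hypotheses is the content of Lemmas~\ref{lemma:Tait_folner} and~\ref{lemma:ratio}; the latter, showing $e(G_{K_n})/|G_{K_n}|\to e(G_L)/|G_L|$, is also what converts the per-vertex tree entropy into the per-crossing determinant density. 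Your effective-conductance shortcut does not substitute for this machinery.
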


A similar limit for a particular closure of knots corresponding to
sublattices of $\L$ that grow in both directions is proved
independently by Silver and Williams \cite{sw-spanningtrees} using the
Laplacian polynomial.

We will call the right-hand side of the above equation the determinant
density of $\L$.

\begin{figure}[h] 
 \includegraphics[height=0.9in]{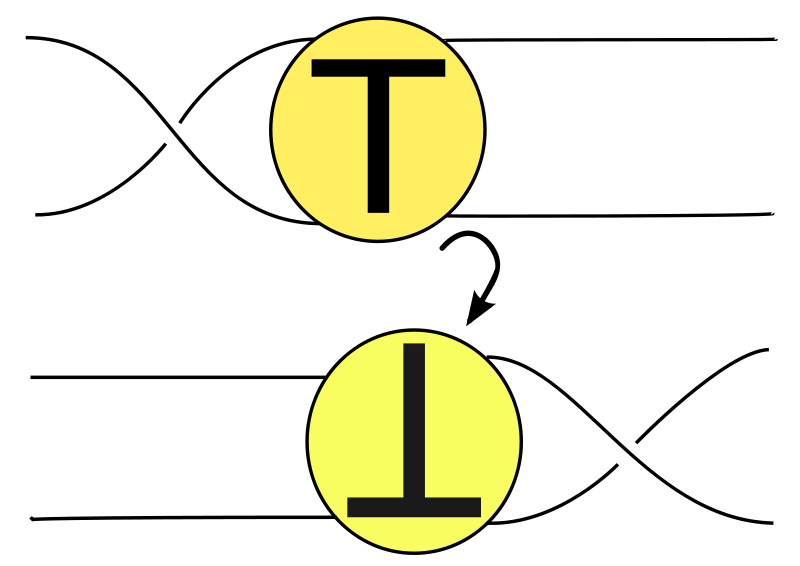}
\caption{Flype move on a link diagram (figure from \cite{flype_fig}).}
\label{fig:flype}
\end{figure}

A {\em flype}, shown in Figure \ref{fig:flype}, is a local move on
link diagrams that has a rich history.  Tait and Little started
classifying alternating links using flypes, and they conjectured that
two reduced alternating diagrams represent the same link if and only
if they are related by flypes; i.e., one can be obtained from the
other by a sequence of flypes. A century later, Menasco and
Thistlethwaite \cite{menasco-thist:alternating} proved the ``Tait
Flyping Conjecture'' for all alternating links.

\begin{corollary}\label{cor:main}
Let $\L$ and $\L'$ be any biperiodic alternating links, such that their toroidally alternating $\Lambda$--quotient links $L$ and $L'$ are related by flypes. 
Then the determinant densities of $\L$ and $\L'$ are equal.
\end{corollary}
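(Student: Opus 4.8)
The plan is to deduce the corollary from Theorem~\ref{thm:main} by showing that a flype on the toroidal quotient link $L$ does not change either side of the equation defining the determinant density, namely $m(p(z,w))/c(L)$. The most direct route is through the invariant that Theorem~\ref{thm:main} links everything to: the Mahler measure $m(p(z,w))$ of the characteristic polynomial of the toroidal dimer model on $G_{\L}^b$, together with the crossing number $c(L)$. First I would observe that a flype is a local move supported in a ball meeting the diagram in a tangle and its two outgoing strands, and that performing a flype on $L$ (viewed on the torus) lifts $\Lambda$-equivariantly to a move on $\L$; in particular $c(L)=c(L')$, since a flype preserves the number of crossings and the quotient diagrams remain reduced alternating and hence crossing-minimal on the torus by \cite{Adams:crossings}. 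So it remains to show $m(p(z,w)) = m(p'(z,w))$, where $p'$ is the characteristic polynomial of the toroidal dimer model on $G_{\L'}^b$.

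For this I would pass from the dimer model back to spanning trees. The key is that all the equivalences in the excerpt are natural: by \cite{crowell}, for a finite reduced alternating diagram $\det(K)=\tau(G_K)$, and by the discussion preceding Theorem~\ref{thm:main} the quantity $m(p(z,w))$ is, up to the normalization by $c(L)$, the toroidal spanning-tree/dimer entropy attached to $\L$. So I would argue that a flype induces a bijection, compatible with the $\Lambda$-action, between an appropriate periodic family of finite diagrams $K_n\toF\L$ and diagrams $K_n'\toF\L'$ with the same crossing numbers, and that corresponding finite links are in fact related by flypes and therefore by the Tait Flyping Conjecture \cite{menasco-thist:alternating} represent the same link — in particular $\det(K_n)=\det(K_n')$ for all $n$. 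Taking the limit in Theorem~\ref{thm:main} applied to both sequences then gives $m(p(z,w))/c(L) = m(p'(z,w))/c(L')$, and since $c(L)=c(L')$ the two Mahler measures agree, proving the corollary.

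The step I expect to be the main obstacle is producing the equivariant family of finite diagrams and checking that a flype on $L$ really does descend to honest flypes on the $K_n$: a flype twists a tangle by a half-turn, so one must be careful that the flyping ball in $L$, when unrolled to $\L$ and then truncated to a large F{\o}lner piece, can be realized inside the finite diagram $K_n$ without interfering with the rest of the diagram, and that the resulting $K_n'$ still satisfies $K_n'\toF\L'$ with $c(K_n')=c(K_n)$. An alternative, cleaner route that sidesteps finite approximations entirely is to show directly that a flype on $L$ induces an isomorphism of the toroidal bipartite graphs $G_{\L}^b$ and $G_{\L'}^b$ up to a change of homology basis of $T^2$ (a flype can rotate a tangle, permuting the two periodic directions or shearing the lattice $\Lambda$), hence a monomial change of variables $z\mapsto z^aw^b$, $w\mapsto z^cw^d$ with $\begin{pmatrix}a&b\\c&d\end{pmatrix}\in GL_2(\Z)$ in the characteristic polynomial; since the Mahler measure of a two-variable polynomial is invariant under such monomial substitutions, $m(p)=m(p')$ follows immediately. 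I would present the spanning-tree/flyping-conjecture argument as the main proof and remark on the monomial-substitution viewpoint as the conceptual reason it works.
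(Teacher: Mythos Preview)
Your main line of argument is exactly the paper's: the paper's proof is a single sentence, namely that in the limit of Theorem~\ref{thm:main} both $\det(K_n)$ and $c(K_n)$ are invariant under flypes, so the determinant density is unchanged. Your elaboration of how to realize the flype on finite approximants $K_n$ is reasonable and more explicit than the paper, but two points deserve correction. First, invoking the Tait Flyping Conjecture is unnecessary and misstated: a flype is already an ambient isotopy of $S^3$, so $K_n$ and $K_n'$ are the \emph{same} link and hence have equal determinant and crossing number; the Menasco--Thistlethwaite theorem is the converse (any two reduced alternating diagrams of the same link are flype-related), which you do not need here. Second, your alternative ``cleaner route'' is not obviously valid: a flype corresponds to a Whitney $2$--flip on the Tait graph, which in general changes $G_\L$ and $G_\L^b$ as abstract graphs, not merely the homology basis of $T^2$; so one should not expect $p$ and $p'$ to differ only by a $GL_2(\Z)$ monomial substitution.
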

\begin{proof}
In the limit above, both $\det(K_n)$ and $c(K_n)$ are invariant under flypes.
\end{proof}

\section{Toroidal dimer model}\label{sec:dimer}
The study of the dimer model is an active research area in statistical
mechanics (see the excellent introductory lecture notes \cite{cimasoni-survey, kenyon-survey}).  
A {\em dimer covering} (or {\em perfect matching}) of a graph is a
subset of edges that cover every vertex exactly once; i.e., a pairing
of adjacent vertices.  The dimer model is the study of the set of
dimer coverings of $G$.  As we discuss below, it is also related to
the spanning tree model of an associated planar graph.

\subsection*{Planar graphs}
The simplest case is when $G$ is a finite balanced bipartite planar
graph, with edge weights $\mu_e$ for each edge $e$ in $G$.  A {\em
  Kasteleyn weighting} is a choice of sign for each edge, such that
each face of $G$ with $0$ mod $4$ edges has an odd number of signs,
and each face with $2$ mod $4$ edges has an even number of signs.  A
{\em Kasteleyn matrix} $\K$ is a weighted adjacency matrix of
$G$, such that rows are indexed by black vertices, and columns by
white vertices.  The matrix coefficients are $\pm\mu_e$, with the sign
determined by the Kasteleyn weighting.  Then, taking the sum over all
dimer coverings $M$ of $G$, the {\em partition function} $Z(G)$
satisfies (see \cite{cimasoni-survey, kenyon-survey}):
$$ Z(G):= \sum_M \prod_{e\in M} \mu_e  =|\det \K|.$$  
With $\mu_e=1$ for all edges, $Z(G)$ is the number of dimer coverings of $G$.  
Also see \cite{cdr-dimers} for relations between dimer coverings of planar graphs and knot theory.

\subsection*{Toroidal graphs}
Now, let $G$ be a finite balanced bipartite toroidal graph.  As in the
planar case, we choose a Kasteleyn weighting on edges of $G$.  We then
choose oriented simple closed curves $\gamma_z$ and $\gamma_w$ on
$T^2$, transverse to $G$, representing a basis of $H_1(T^2)$.  We
orient each edge $e$ of $G$ from its black vertex to its white vertex.
The weight on $e$ is
$$ \mu_e= z^{\gamma_z\cdot e}w^{\gamma_w\cdot e},$$ where $\cdot$
denotes the signed intersection number of $e$ with $\gamma_z$ or
$\gamma_w$.  For example, see Figure~\ref{square-fd1}.  The Kasteleyn
matrix $\K(z,w)$ is the weighted adjacency matrix with rows indexed by
black vertices and columns by white vertices, and matrix entries
$\pm\mu_e$, with the sign determined by the Kasteleyn weighting.  The
{\em characteristic polynomial} is defined as
$$ p(z,w) = \det \K(z,w).$$
See Section~\ref{sec:examples} for examples.  With $\mu_e$ as above, the number of dimer coverings of $G$ is given by (see \cite{cimasoni-survey, kenyon-survey}):
$$ Z(G) = \frac{1}{2} \ |-p(1,1) + p(-1,1) + p(1,-1) + p(-1,-1)|.$$

\begin{figure}

\begin{tikzpicture} 
\node at (0,0)
{ \includegraphics[height=1.75in]{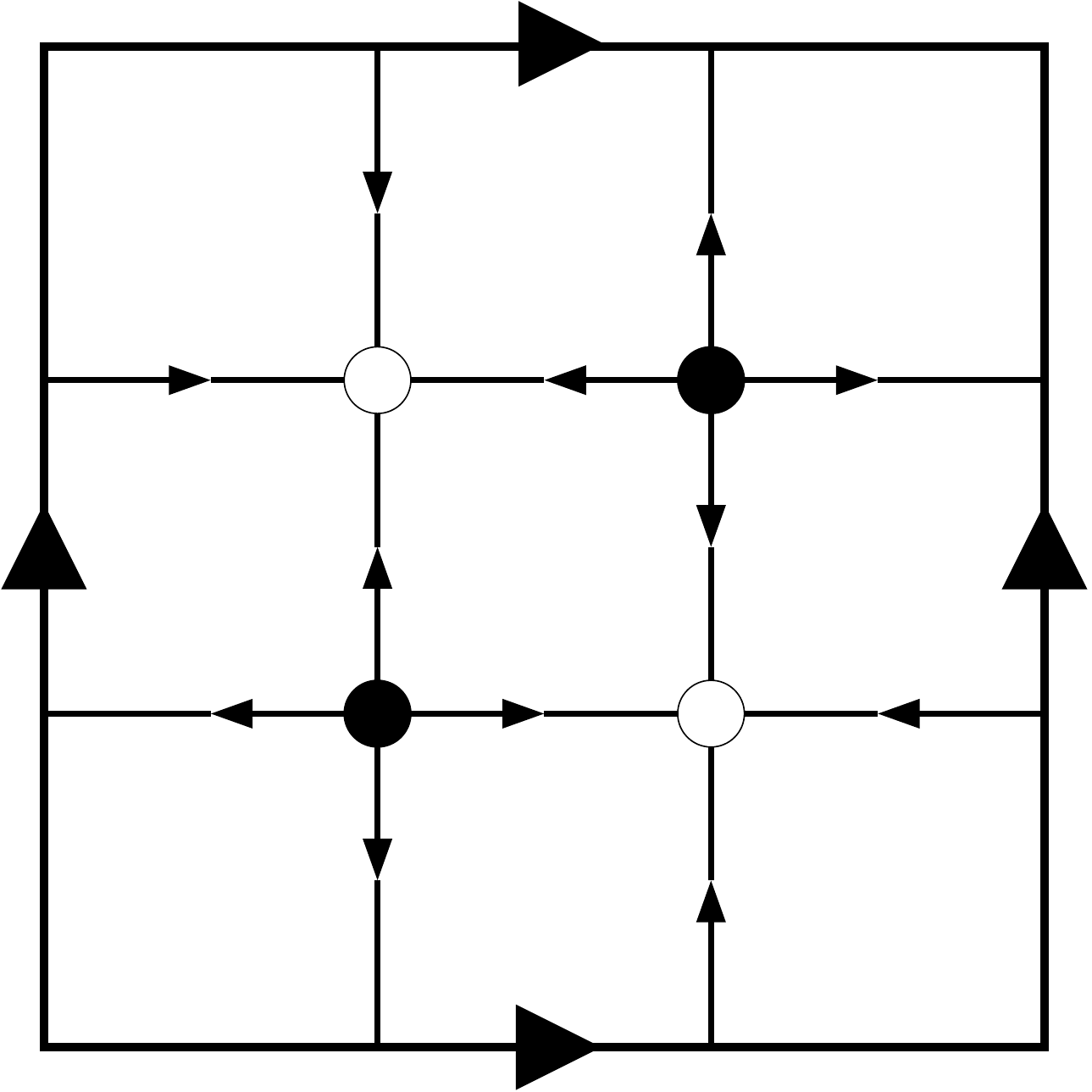}};
\node at (-2.5,0) {$\gamma_w$};
\node at (0,-2.5) {$\gamma_z$};
\node at (2.5,0) {$\gamma_w$};
\node at (0,2.5) {$\gamma_z$};
\node at (1,1) {2};
\node at (1,-1) {$2'$};
\node at (-1,1) {$1'$};
\node at (-1,-1) {1};
\node at (-0.3,0) {$\fbox{-}$};
\node at (-0.3,1.2) {$\fbox{-}$};

\draw[thick,->] (-6,0) -- (-4,0);
\draw[thick,->] (-5,-1) -- (-5,1);
\node at (-4,0.3) {$\gamma_z$};
\node at (-5.3,1) {$e$};
\node at (-5,-2) {$\mu_e=z$};

\draw[thick,->] (-10,0) -- (-8,0);
\draw[thick,<-] (-9,-1) -- (-9,1);
\node at (-8,0.3) {$\gamma_z$};
\node at (-9.3,1) {$e$};
\node at (-9,-2) {$\mu_e=\displaystyle{\frac{1}{z}}$};

\node at (-7,-3) {(a)};
\node at (0,-3) {(b)};

\end{tikzpicture}

\caption{(a) Edge weights $\mu_e= z^{\gamma_z\cdot e}$ to compute $\K(z,w)$. (b) Toroidal bipartite
  graph $G$ with a choice of Kasteleyn weighting.}
\label{square-fd1}
\end{figure}

\subsection*{Biperiodic graphs}
Finally, let $G$ be a biperiodic bipartite planar graph $G$, so that
translations by a two-dimensional lattice $\Lambda$ act by
isomorphisms of $G$.  Let $G_n$ be the finite balanced bipartite
toroidal graph given by the quotient $G/(n\Lambda)$.  Kenyon, Okounkov
and Sheffield \cite{KOS} gave an explicit expression for the growth
rate of the toroidal dimer model on $\{G_n\}$:
\begin{theorem}\cite[Theorem~3.5]{KOS}\label{thm:KOS}
$$ \log Z(G) := \lim_{n\to\infty}\frac{1}{n^2}\log Z(G_n) = m(p(z,w)).$$
\end{theorem}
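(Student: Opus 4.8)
The plan is to prove Theorem~\ref{thm:KOS}, the identity $\log Z(G):=\lim_{n\to\infty}n^{-2}\log Z(G_n)=m(p(z,w))$, by first reducing the toroidal partition function $Z(G_n)$ to a determinant expression and then recognizing the limit of normalized log-determinants as a Mahler measure via a discrete Fourier transform. The essential structural input is that $G_n=G/(n\Lambda)$ is obtained from the single fundamental-domain graph $G_1=G/\Lambda$ by taking an $n\times n$ array of copies glued cyclically in both lattice directions. The Kasteleyn matrix of $G_n$ therefore has a block-circulant structure indexed by $\Z/n\Z\times\Z/n\Z$, and the monodromies around the two homology generators $\gamma_z,\gamma_w$ on $G_1$ are exactly the factors of $z$ and $w$ in the edge weights $\mu_e=z^{\gamma_z\cdot e}w^{\gamma_w\cdot e}$.

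First I would recall (or reprove) the block-diagonalization of a doubly block-circulant matrix by the two-dimensional discrete Fourier transform. Writing $\zeta_n=e^{2\pi i/n}$, the Kasteleyn matrix $\K_n$ of $G_n$ is conjugate to the block-diagonal matrix whose blocks are $\K_1(\zeta_n^j,\zeta_n^k)$ for $0\le j,k\le n-1$, where $\K_1(z,w)$ is precisely the weighted fundamental-domain Kasteleyn matrix whose determinant defines $p(z,w)=\det\K_1(z,w)$. This is the step where one must be careful about signs: the Kasteleyn weighting on $G_n$ need not be simply the pulled-back weighting from $G_1$, because the parity conditions on faces wrapping around the torus can force sign twists along $\gamma_z$ and $\gamma_w$. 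I would handle this by recording the four sign sectors $(\pm,\pm)$ and expressing $Z(G_n)$ as the standard signed combination of the four restricted partition functions, each of which block-diagonalizes into a product of $p(\epsilon_1\zeta_n^j,\epsilon_2\zeta_n^k)$ over the appropriate lattice of Fourier modes. Consequently, up to the bounded signed-sum prefactor and a factor of $\tfrac12$, one obtains
$$\log Z(G_n)=\sum_{j=0}^{n-1}\sum_{k=0}^{n-1}\log\bigl|p(\zeta_n^{\,j},\zeta_n^{\,k})\bigr|+o(n^2).$$
The doubled-torus trick (passing to $G_{2n}$ or summing the four sign sectors) absorbs the sign subtleties so that the dominant contribution is governed by $p(z,w)$ evaluated at all $n^2$-th roots of unity on the unit torus.

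Next I would recognize the double sum $n^{-2}\sum_{j,k}\log|p(\zeta_n^j,\zeta_n^k)|$ as a Riemann sum for the integral
$$\frac{1}{(2\pi)^2}\int_0^{2\pi}\!\!\int_0^{2\pi}\log\bigl|p(e^{i\theta},e^{i\phi})\bigr|\,d\theta\,d\phi=m(p(z,w)),$$
which is exactly the definition of the two-variable Mahler measure given in the excerpt. The convergence of these Riemann sums to the integral is the main obstacle, because $\log|p|$ is not continuous: it has logarithmic singularities wherever $p(z,w)$ vanishes on the unit torus $S^1\times S^1$, so naive uniform Riemann-sum convergence fails. The hard part is therefore a dominated-convergence or potential-theoretic argument controlling the contribution of Fourier modes near the zero set of $p$ on the torus. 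I would argue that $\log|p|$ is integrable (its singularities are only logarithmic, hence $L^1$ on the torus), and that the roots of unity, being equidistributed, place only a negligible density of sample points near any fixed codimension-one real-analytic zero curve; a standard estimate shows the sum of $\log|p(\zeta_n^j,\zeta_n^k)|$ over the $O(n)$ modes within distance $O(1/n)$ of the zero locus is $o(n^2)$. Combining this equidistribution estimate with continuity of $\log|p|$ away from its zeros yields convergence of the Riemann sums to $m(p(z,w))$, completing the identification of the growth rate.

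Finally, I would note that the $o(n^2)$ error terms—coming from the bounded logarithm of the four-term signed prefactor, from the $\tfrac12$ factor, and from edge effects near the boundary between fundamental domains—all vanish after dividing by $n^2$ and taking $n\to\infty$, so that $\lim_{n\to\infty}n^{-2}\log Z(G_n)=m(p(z,w))$ as claimed. Since Theorem~\ref{thm:KOS} is quoted directly from \cite{KOS}, the cleanest route is to cite their Theorem~3.5 for the precise treatment of the sign sectors and the equidistribution estimate, while the block-circulant Fourier diagonalization above supplies the conceptual reason that the Mahler measure of the characteristic polynomial is the natural limit.
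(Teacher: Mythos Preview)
The paper does not prove this theorem: it is quoted verbatim as \cite[Theorem~3.5]{KOS} and used as a black box in the proof of Theorem~\ref{thm:main}. So there is nothing to compare against---the ``paper's own proof'' is simply a citation.

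That said, your sketch is a faithful outline of how the argument in \cite{KOS} actually goes: the block-circulant structure of $\K_n$ under the $\Z/n\Z\times\Z/n\Z$ translation action, its diagonalization by the discrete Fourier transform into blocks $\K_1(\zeta_n^j,\zeta_n^k)$, the four sign sectors coming from the spin structures on $T^2$, and the identification of $n^{-2}\sum_{j,k}\log|p(\zeta_n^j,\zeta_n^k)|$ as a Riemann sum for $m(p(z,w))$. You correctly flag the only genuinely delicate point, namely convergence of the Riemann sums when $p$ vanishes on $S^1\times S^1$; in \cite{KOS} this is handled exactly as you suggest, via integrability of $\log|p|$ and an equidistribution estimate. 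For the purposes of the present paper, however, all of this is unnecessary: a bare citation to \cite{KOS} is what is intended, and your final paragraph already acknowledges this.
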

The quantity $\log Z(G)$ on the left is called the entropy of the
toroidal dimer model, or the partition function per fundamental
domain.  Theorem~\ref{thm:KOS} says that, independent of any choice of
Kasteleyn weighting and any choice of homology basis for the
$\Lambda$--action, the entropy of any toroidal dimer model is given by
the Mahler measure of its characteristic polynomial.

\section{Examples}\label{sec:examples}

\subsection{Square weave}

\begin{figure}[h] 
\begin{tabular}{cc}
 \includegraphics[height=1.3in]{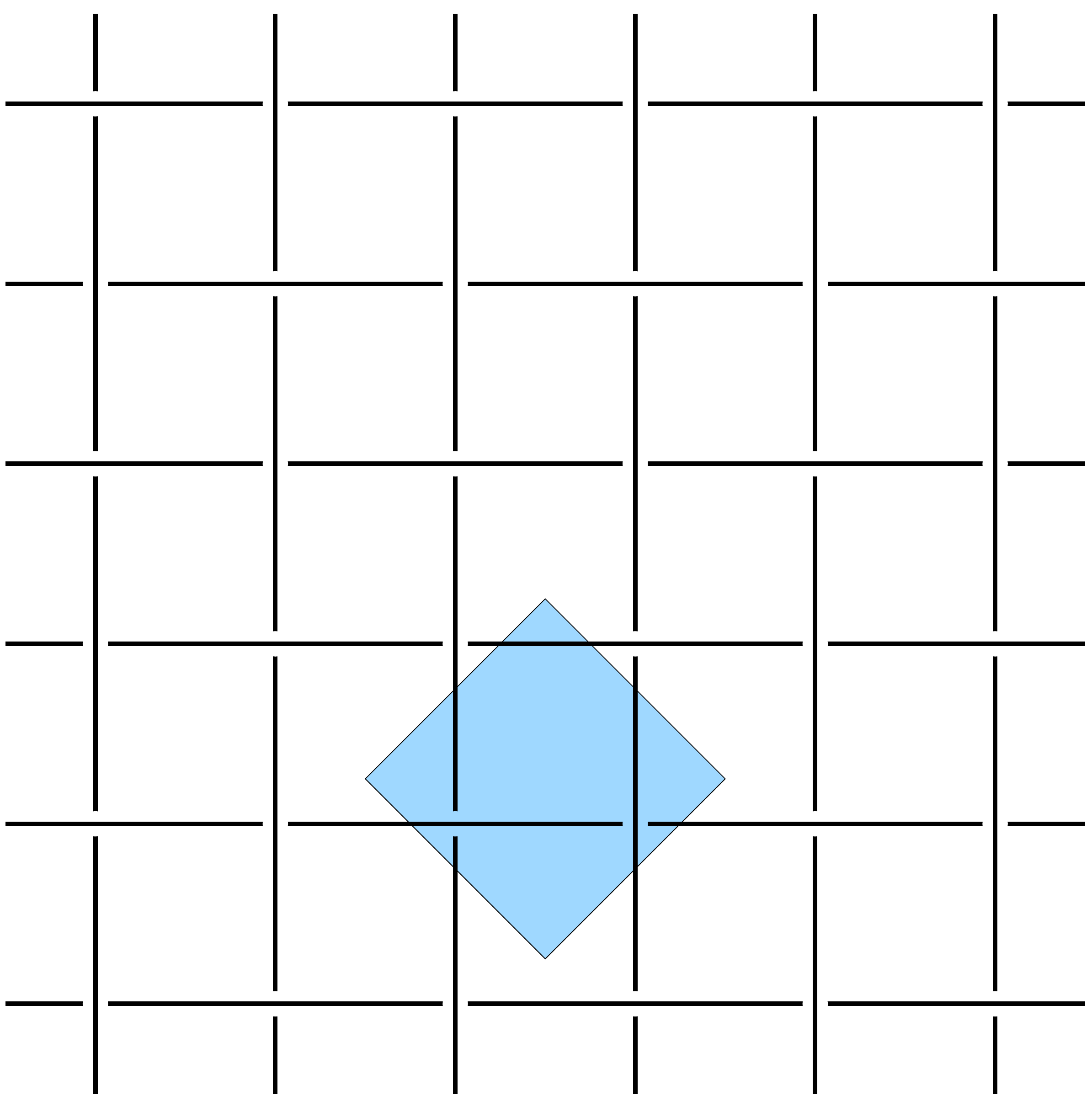} &  \quad \quad \quad \quad 
 \includegraphics[height=1.3in]{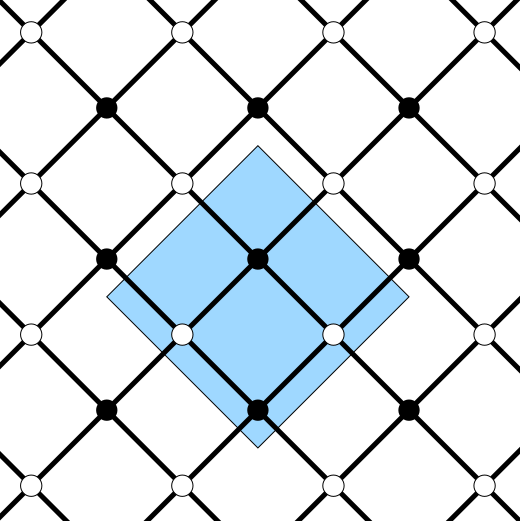}    \\
  (a) & \quad  \quad \quad \quad  (b)   \\
\end{tabular}
\caption{(a) Infinite square weave $\W$ and fundamental domain for $L$.\qquad
  (b) Bipartite graph $G_{\W}^b$ and fundamental domain for $G_L^b$.}
\label{fig-square}
\end{figure}

As mentioned in the introduction, Figure~\ref{fig:weave} shows the
infinite square weave $\W$.  Figure~\ref{fig-square}(a) shows a
slightly different fundamental domain from Figure~\ref{fig:weave}(b)
and the toroidally alternating link $L_1$ with $c(L_1)=2$.  Both of
the Tait graphs of $\W$ are the infinite square grid.  They are shown
overlaid in Figure~\ref{fig-square}(b), which shows the biperiodic
bipartite graph $G_{\W}^b$, as well as the fundamental domain for
$G_{L_1}^b$, which matches the toroidal graph shown in Figure
\ref{square-fd1}(b).

We now compute $p(z,w)= \det\K(z,w)$ for $G=G_{\W}^b$, as described
in Section~\ref{sec:dimer}, and in more detail in \cite{cimasoni-survey, kenyon-survey}.
Using Figure~\ref{square-fd1}(b) with the ordering as shown, 

\[ 
\K(z,w)=
\begin{bmatrix}
-1 - 1/z& 1 + w\cr
1+1/w& 1+z
\end{bmatrix},
\quad p(z,w) = -\left(4 + \frac{1}{w} + w + \frac{1}{z} +z\right)
\]

Exact computations (\cite{Boyd-specialvalues, RV-modular})
imply that
$\displaystyle 2\pi\, m(p(z,w)) =  2\,\voct$.
Therefore, by Theorems~\ref{thm:gmax} and \ref{thm:main},
$\voct$ is the limit of both determinant densities and volume densities for $K_n\toF\W$:
$$\lim_{n\to \infty} \frac{2 \pi \log \det(K_n)}{c(K_n)}
=\frac{2 \pi m( p(z,w))}{c(L_1)}
=\voct 
=\lim_{n\to \infty} \frac{\vol(K_n)}{c(K_n)}.
$$

\subsection{Triaxial link}

\begin{figure}[h]
\begin{tabular}{cc}
\includegraphics[height=1.5in]{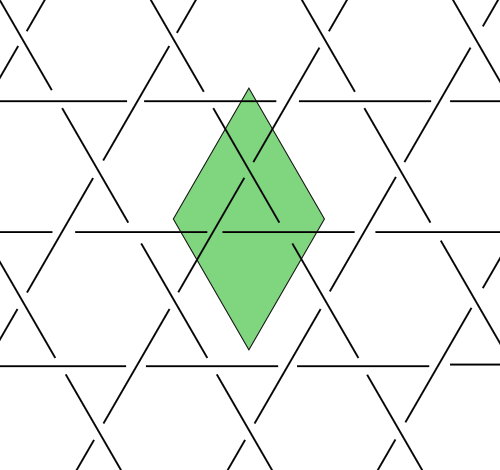} &
\hspace*{1cm}
 \includegraphics[height=1.5in]{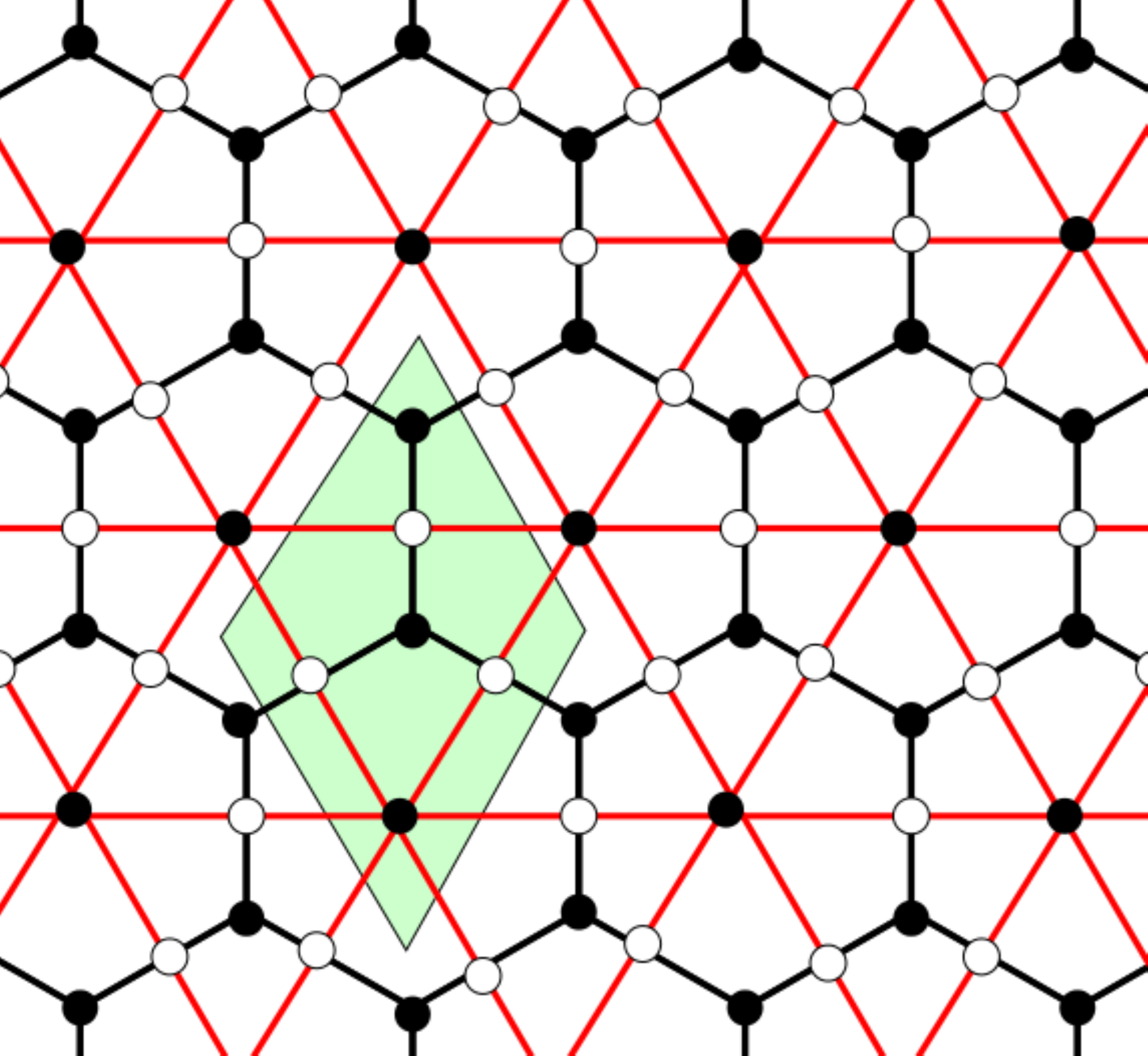} \\
(a) & \hspace*{1cm} (b)  \\
\end{tabular}
\caption{(a) Diagram of biperiodic triaxial link $\L$, and fundamental
  domain for $L$.  (b) Bipartite graph $G_{\L}^b$ and fundamental
  domain for $G_L^b$.}
\label{fig:triaxial}
\end{figure}

Figure~\ref{fig:triaxial}(a) shows part of the biperiodic
alternating diagram of the {\em triaxial link} $\L$, and the
fundamental domain for the toroidally alternating link $L$.  Its
projection graph $G(\L)$ is the trihexagonal tiling.  The Tait graphs
of $\L$ are the regular hexagonal and triangular tilings, shown
overlaid in Figure~\ref{fig:triaxial}(b) to form the biperiodic
balanced bipartite graph $G_{\L}^b$.

We now compute $p(z,w)= \det\K(z,w)$ for $G=G_{\L}^b$, as described
in Section~\ref{sec:dimer}, and in more detail in
\cite{cimasoni-survey, kenyon-survey}.  Using
Figure~\ref{fig:Kasteleyn}, with the homology basis, ordered vertices
and a choice of Kasteleyn weighting on edges as shown,

\begin{figure}

\begin{tikzpicture} 
\node at (0,0)
{ \includegraphics[height=2in]{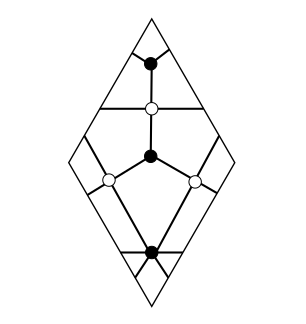}};
\node at (0,1.8) {\tiny{$1$}};
\node at (0.2,1.1) {\tiny{$1'$}};
\node at (0.25,0.25) {\tiny{$2$}};
\node at (-0.4,-0.4) {\tiny{$2'$}};
\node at (0.4,-0.4) {\tiny{$3'$}};
\node at (0,-1.1) {\tiny{$3$}};

\draw[thick,->] (-2,0) -- (-.95,1.8);
\node at (-2,1) {$\gamma_w$};
\draw[thick,->] (-2,0) -- (-.95,-1.8);
\node at (-2,-1) {$\gamma_z$};

\node at (-.25,-.65) {$-$};
\node at (-.4,.7) {$-$};
\node at (.7,.25) {$-$};

\end{tikzpicture}

\caption{For the triaxial link $\L$, the toroidal graph $G_L^b$, with
  a choice of homology basis, ordered vertices and a choice of
  Kasteleyn weighting on edges.}
\label{fig:Kasteleyn}
\end{figure}

\[ 
\K(z,w)=
\begin{bmatrix}
1 & z & w \cr
1& 1& 1 \cr
1/z-1/w & 1/w-1 & 1-1/z 
\end{bmatrix},
\quad p(z,w)= 6-\left(\frac{1}{w}+w+\frac{1}{z}+z+\frac{w}{z}+\frac{z}{w}\right)
\]
Using exact computations (\cite{Boyd-specialvalues, RV-modular})
we can verify that $\displaystyle 2\pi\,m(p(z,w)) = 10\vtet$, where
$\vtet\approx 1.01494$ is the hyperbolic volume of the regular ideal
tetraheron.
Therefore, by Theorem~\ref{thm:main},
$$\lim_{n\to \infty} \frac{2 \pi \log \det(K_n)}{c(K_n)}
=\frac{2 \pi m( p(z,w))}{c(L)}
=\frac{10\vtet}{3}. $$
Moreover, in \cite{ckp:bal} we show that for the triaxial link $\L$,
$$ \lim_{n\to \infty} \frac{\vol(K_n)}{c(K_n)} = \frac{\vol(T^2 \times I - L)}{c(L)} = \frac{10\vtet}{3}.$$

Although for the square weave and the triaxial link, the volume and
determinant densities both converge to the volume density of the
toroidal link, this does not seem to be true in general.  We discuss
many such examples in \cite{ckp:bal}.

\section{Proof of Theorem \ref{thm:main}}\label{sec:proof}

Henceforth, let $G(\L)$ and $G_\L$ be the projection graph and Tait graph of $\L$, respectively, both of which are $\Lambda$--biperiodic.
Let $K_n$ be alternating link diagrams that F{\o}lner
converge almost everywhere to $\L$, so that $G(K_n)\toF G(\L)$.
Let $G_n\subset G(K_n)$ form the toroidal F{\o}lner sequence for $G(\L)$.
Note that $|G(K_n)|=c(K_n)$.

\begin{lemma}\label{lemma:Tait_folner}
As $K_n$ F{\o}lner converges almost everywhere to $\L$, the sequence of Tait graphs $G_{K_n}$ F{\o}lner converges almost everywhere to $G_\L$; i.e.,
$$ K_n\toF \L \quad \Longrightarrow \quad G_{K_n} \toF G_\L $$
\end{lemma}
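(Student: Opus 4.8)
The plan is to transfer the F{\o}lner convergence from the projection graphs to the Tait graphs by building, for each $n$, an appropriate toroidal F{\o}lner subsequence inside $G_{K_n}$ out of the given subsequence $G_n\subset G(K_n)$. The starting observation is that the Tait graph $G_L$ of the quotient link sits on the torus with $e(G_L)=c(L)$, and $G_\L$ is its $\Lambda$--periodic lift; more importantly, the checkerboard coloring of the planar projection graph $G(\L)$ is \emph{local}, so a region of $G(\L)$ determines the corresponding piece of $G_\L$ up to a bounded-neighborhood ambiguity. Concretely, each vertex of $G_n$ corresponds to a crossing of $K_n$, hence to an edge of $G_{K_n}$; and each face of $G(K_n)$ that lies in the interior of $G_n$ corresponds either to a vertex of $G_{K_n}$ or of $G_{K_n}^*$. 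So I would define $H_n\subset G_{K_n}$ to be the subgraph spanned by all shaded faces of $G(K_n)$ all of whose incident crossings are vertices of $G_n$ (equivalently, faces well inside $G_n$), together with the edges between them. Because $G_n$ is taken inside $G(\L)\cap(n\Lambda)$, this $H_n$ is a subgraph of $G_\L\cap(n\Lambda')$ for the induced sublattice action, so it is a candidate toroidal F{\o}lner subsequence for $G_\L$.

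The two things to verify are then the F{\o}lner condition $|\partial H_n|/|H_n|\to 0$ and the density condition $|H_n|/e(G_{K_n})\to 1$. For the F{\o}lner condition: a vertex of $H_n$ lies in $\partial H_n$ only if one of its incident edges (crossings) corresponds to a vertex of $G(K_n)$ in $\partial G_n$, or more crudely only if it is within bounded distance (in $G(\L)$) of $\partial G_n$. Since $G(\L)$ is biperiodic, there is a uniform bound $D$ (depending only on $\L$) on the number of shaded faces meeting a given vertex and on the graph-distance inflation between $G(\L)$ and $G_\L$, so $|\partial H_n|\le C\,|\partial G_n|$ for a constant $C=C(\L)$. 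Likewise $|H_n|$ differs from (number of shaded faces of $G(\L)$ in the interior of $G_n$) by at most $C|\partial G_n|$, and that face count is comparable to $|G_n|$ by periodicity (Euler's formula on the torus relates faces, edges, and vertices of $G_L$, so over a region the number of shaded faces is asymptotically a fixed fraction of the number of crossings). Combining, $|\partial H_n|/|H_n|\le C'\,|\partial G_n|/|G_n|\to 0$, giving ($i$).

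For ($ii$): $e(G_{K_n})=c(K_n)$ by definition, and I want $e(H_n)/c(K_n)\to 1$. Now $e(H_n)$ counts crossings of $K_n$ that are interior crossings of $G_n$ in the checkerboard sense, which differs from $|G_n|$ by at most $C|\partial G_n|$; and hypothesis ($ii$) for $K_n\toF\L$ gives $|G_n|/c(K_n)\to 1$, while ($i$) gives $|\partial G_n|/|G_n|\to 0$, hence $|\partial G_n|/c(K_n)\to 0$. Therefore $e(H_n)/c(K_n)\to 1$, which is exactly the statement $G_{K_n}\toF G_\L$ after noting that $e(\cdot)$ is the natural size for Tait graphs (since $e(G_K)=c(K)$); one should phrase Definition~\ref{def:folner_converge_graphs} for Tait graphs with edge-count in place of vertex-count, or equivalently observe that for these graphs edges and vertices are comparable by the torus Euler characteristic. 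The main obstacle, and the part requiring the most care, is the bookkeeping near the boundary: making precise the ``bounded-distance'' comparison between a region of the projection graph $G(\L)$ and the corresponding region of the Tait graph $G_\L$, so that $\partial H_n$ and the discrepancy $\big||H_n|-(\text{interior shaded faces})\big|$ are both genuinely $O(|\partial G_n|)$ with a constant depending only on the periodic structure of $\L$, and not on $n$. Everything else is then a direct chase through Definitions~\ref{def:folner}--\ref{def:folner_converge}.
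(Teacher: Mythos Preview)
Your construction of $H_n$ and your argument for the F{\o}lner condition ($i$) follow the same line as the paper: build $H_n$ from the crossings in the interior of $G_n$, then use the uniform degree bound $d$ on $G(\L)$ and $G_\L$ (from biperiodicity) to get $|\partial H_n|\le C\,|\partial G_n|$ and $|H_n|\ge c\,|G_n|$. The paper's variant is edge-induced (an edge of $G_{K_n}$ lies in $H_n$ iff its crossing lies in $G_n\setminus\partial G_n$) rather than vertex-induced as in your sketch, but the two are essentially equivalent and the boundary bookkeeping you flag as ``requiring the most care'' is exactly what the paper writes out.

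The genuine gap is in condition ($ii$). Definition~\ref{def:folner_converge_graphs} asks for $|H_n|/|G_{K_n}|\to 1$ with $|\cdot|$ the \emph{vertex} count, whereas you establish $e(H_n)/e(G_{K_n})\to 1$. Neither of your proposed bridges works as stated. Rephrasing the definition with edge counts changes the target statement. And ``edges and vertices are comparable by the torus Euler characteristic'' is only immediate for the biperiodic graph $G_\L$ (hence for $H_n$), not for $G_{K_n}$, whose structure outside $H_n$ is a priori uncontrolled; indeed the comparability $e(G_{K_n})/|G_{K_n}|\to e(G_L)/|G_L|$ is precisely Lemma~\ref{lemma:ratio} in the paper, whose proof \emph{uses} the construction and estimates of the present lemma, so appealing to it here is circular. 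The paper closes ($ii$) directly: it bounds the number $v_n^{\mathrm{out}}$ of vertices of $G_{K_n}$ outside $H_n$ by the number $e_n^{\mathrm{out}}$ of edges outside $H_n$, and then bounds $e_n^{\mathrm{out}}/|H_n|$ by a constant times $\big(c(K_n)-|G_n|+|\partial G_n|\big)/|G_n|$, which tends to $0$ by the hypotheses in Definition~\ref{def:folner_converge}. You should carry out this vertex-count estimate directly rather than detour through edges.
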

\begin{proof}
The choice of checkerboard coloring of faces of $G(\L)$ to form $G_\L$
induces a checkerboard coloring of faces of $G(K_n)$, and hence a
unique choice of Tait graphs $G_{K_n}$.  We define $H_n\subset
G_{K_n}$ as follows: The edge $e\in G_{K_n}$ is an edge of $H_n$ if
and only if the corresponding vertex $v\in G(K_n)$ is a vertex of
$G_n-\partial G_n$.  Because $G_n$ is an exhaustive nested sequence of connected
subgraphs of $G(\L)\cap (n\Lambda)$, it follows immediately that $H_n$ is an
exhaustive nested sequence of connected subgraphs of $G_\L\cap (n\Lambda)$.

Since $\L$ is biperiodic, there is a positive integer $d$ such that
$\max(\deg(G(\L)), \deg(G_\L))\leq d$.  To prove the F{\o}lner
condition, let $v\in\partial H_n$.  As a vertex in $G_{\L}$, $v$ is
incident to a collection of edges $\{e_i\in H_n\}$ and $\{e'_j\notin
H_n\}$, with $0<i, j<d$.  By definition of $H_n$, these edges
correspond to vertices $\{v_i, v'_j \in G_n\}$, such that some
$v'_j\in \partial G_n$.  Hence, $|\partial H_n|\leq d\,|\partial
G_n|$.  Also by definition of $H_n$, $|G_n|\leq d\,|H_n|$.  Therefore,
$$ 0\leq \frac{|\partial H_n|}{|H_n|} \leq \frac{d\,|\partial G_n|}{\frac{1}{d}|G_n|} = d^2\frac{|\partial G_n|}{|G_n|}\underset{n\to\infty}{\longrightarrow} 0 $$

Let $v^{\out}_n$ and $e^{\out}_n$ be the numbers of vertices and edges, respectively, of $G_{K_n}-H_n$. 
To prove item ($ii$) of Definition \ref{def:folner_converge_graphs}, we will show $\lim\limits_{n\to\infty} v^{\out}_n/|G_{K_n}| = 0$.

\begin{eqnarray}
\label{eqn:edensity}
\nonumber 
 0\leq \frac{v^{\out}_n}{|G_{K_n}|}  &\leq&   \frac{e^{\out}_n}{|G_{K_n}|} \leq 
 \frac{|G(K_n)-(G_n-\partial G_n)|}{|H_n|} 
\leq \frac{|G(K_n)-(G_n-\partial G_n)|}{\frac{1}{d}|G_n|} \\ 
   &\leq& \frac{d \left(|G(K_n)|-|G_n|+|\partial G_n|\right)}{|G_n|} 
=
\frac{d \left(c(K_n)-|G_n|\right)}{c(K_n)}\cdot\frac{c(K_n)}{|G_n|} + \frac{d\;|\partial G_n|}{|G_n|}\underset{n\to\infty}{\longrightarrow} 0
\end{eqnarray}
The final limit follows from $\lim\limits_{n\to\infty} |G_n|/c(K_n) = 1$ and the F{\o}lner condition. 
\end{proof}

\begin{lemma}\label{lemma:ratio}
As $K_n$ F{\o}lner converges almost everywhere to $\L$, with $L$ its $\Lambda$--quotient link, 
$$\lim_{n\to\infty}\frac{e(G_{K_n})}{|G_{K_n}|} = \frac{e(G_L)}{|G_L|}.$$ 
\end{lemma}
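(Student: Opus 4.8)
The plan is to reduce this statement to a purely combinatorial fact about F{\o}lner exhaustions of the biperiodic graph $G_\L$, and to feed in Lemma~\ref{lemma:Tait_folner}. Since the Tait graph has one edge per crossing, $e(G_{K_n})=c(K_n)=|G(K_n)|$ and $e(G_L)=c(L)$, so the asserted limit is equivalent to $c(K_n)/|G_{K_n}|\to e(G_L)/|G_L|$; in other words, the average vertex degree of the Tait graphs $G_{K_n}$ must converge to the average vertex degree per fundamental domain of $G_\L$. Applying Lemma~\ref{lemma:Tait_folner} produces subgraphs $H_n\subset G_{K_n}$ forming a toroidal F{\o}lner sequence for $G_\L$ with $|H_n|/|G_{K_n}|\to 1$. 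By the construction in that proof, an edge of $G_{K_n}$ lies in $H_n$ precisely when the corresponding crossing of $K_n$ is a vertex of $G_n-\partial G_n$, so $e(H_n)=|G_n|-|\partial G_n|$; combining $|G_n|/c(K_n)\to 1$ with the F{\o}lner estimate $|\partial G_n|/|G_n|\to 0$ gives $e(H_n)/c(K_n)\to 1$, hence $e(H_n)/e(G_{K_n})\to 1$. From
\[
  \frac{e(G_{K_n})}{|G_{K_n}|}
  =\frac{e(G_{K_n})}{e(H_n)}\cdot\frac{e(H_n)}{|H_n|}\cdot\frac{|H_n|}{|G_{K_n}|},
\]
with the first and third factors tending to $1$, it then suffices to show $e(H_n)/|H_n|\to e(G_L)/|G_L|$.

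For this I would argue directly that any toroidal F{\o}lner exhaustion of a biperiodic graph has edge-to-vertex ratio converging to its per-fundamental-domain value. Fix a fundamental domain $D\subset\RR^2$ for $\Lambda$, chosen generically so that no vertex of $G_\L$ lies on $\partial D$; then each vertex of $G_\L$ lies in a unique translate $D+\lambda$ ($\lambda\in\Lambda$), every translate contains exactly $|G_L|$ vertices of $G_\L$, and their $G_\L$-degrees sum to $2e(G_L)$. Let $A_n\subset\Lambda$ index the translates meeting $H_n$, and let $B_n\subset A_n$ index those for which every vertex of $G_\L$ in $D+\lambda$ lies in $H_n-\partial H_n$. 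Using that $G_\L$ has bounded degree $d$ and bounded edge length: a translate in $A_n\setminus B_n$ either contains a vertex of $\partial H_n$, or contains both a vertex of $H_n$ and a vertex of $G_\L$ not in $H_n$, and these are joined in $G_\L$ by a path of bounded length, which must therefore meet $\partial H_n$; in either case $D+\lambda$ lies within bounded distance of $\partial H_n$, so $|A_n\setminus B_n|=O(|\partial H_n|)=o(|H_n|)$ by the F{\o}lner condition. Counting vertices over translates gives $|H_n|=|G_L|\,|B_n|+O(|G_L|\,|A_n\setminus B_n|)=|G_L|\,|B_n|+o(|H_n|)$, so $|B_n|$ is comparable to $|H_n|$; and since every vertex in a translate indexed by $B_n$ has the same degree in $H_n$ as in $G_\L$, summing $2e(H_n)=\sum_{v\in H_n}\deg_{H_n}(v)$ over translates gives $2e(H_n)=2e(G_L)\,|B_n|+O(d\,|G_L|\,|A_n\setminus B_n|)=2e(G_L)\,|B_n|+o(|H_n|)$. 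Dividing these two estimates yields $e(H_n)/|H_n|\to e(G_L)/|G_L|$, which completes the argument.

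I expect the last step to be the main obstacle: one must rule out that the F{\o}lner exhaustion is biased toward vertices of atypical degree, and this is exactly what the bound ``the number of partially filled fundamental domains is $O(|\partial H_n|)$'' accomplishes. Everything else is formal, the only mild subtlety being the treatment of edges and vertices straddling the chosen fundamental-domain boundaries, handled by the genericity of $D$ and the bounded geometry of $G_\L$.
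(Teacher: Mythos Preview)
Your proof is correct and follows essentially the same route as the paper's: both reduce the ratio $e(G_{K_n})/|G_{K_n}|$ to $e(H_n)/|H_n|$ for the toroidal F{\o}lner sequence $H_n$ from Lemma~\ref{lemma:Tait_folner}, and then appeal to biperiodicity of $G_\L$. The execution differs in two respects worth noting. First, for the reduction, the paper separately shows $e_n^{\out}/|G_{K_n}|\to 0$ (quoting the displayed estimate from Lemma~\ref{lemma:Tait_folner}) and $v_n^{\out}/e_n^{\In}\to 0$ (via a face-counting argument in the projection graph), whereas your factorization
\[
\frac{e(G_{K_n})}{|G_{K_n}|}=\frac{e(G_{K_n})}{e(H_n)}\cdot\frac{e(H_n)}{|H_n|}\cdot\frac{|H_n|}{|G_{K_n}|}
\]
together with $e(H_n)=|G_n|-|\partial G_n|$ handles both outer terms at once and avoids the face count. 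Second, for the final step the paper simply asserts ``biperiodicity of $\L$ implies that the final limit is the corresponding ratio for the $\Lambda$--quotient link $L$,'' while you supply the fundamental-domain counting argument; your bound $|A_n\setminus B_n|=O(|\partial H_n|)$ is exactly what makes that assertion rigorous, and the bounded-path claim you use is justified since a connected biperiodic planar graph is quasi-isometric to $\Z^2$.
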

\begin{proof}
Let $H_n\subset G_{K_n}$ form the toroidal F{\o}lner sequence for $G_\L$ as in Lemma~\ref{lemma:Tait_folner}.
Let $v^{\In}_n,\, v^{\out}_n,\, e^{\In}_n,\, e^{\out}_n$ be the numbers of vertices and
edges, respectively, of $H_n$ and $G_{K_n}-H_n$.  

First, we claim that 
\begin{equation}\label{eq:v-out}
\lim_{n\to\infty}\frac{v^{\out}_n}{e^{\In}_n}=0.
\end{equation}
To prove this claim, for every integer $k>0$, let $|f_n^k|$
denote the number of $k$--faces of $G(K_n)$ that are not contained in
$G_n$.  Hence, $v^{\out}_n=|G_{K_n}-H_n|\leq \sum_k|f_n^k|$.  By
counting vertices, $\sum_k k\,|f^k_n|\leq 4\,|G(K_n)-G_n|$.  The
factor $4$ appears because every vertex belongs to four faces, so it
will be counted at most four times in the sum.  Now, since
$e^{\In}_n=|G_n-\partial G_n|, \ |G(K_n)|=c(K_n)$, and using the
limits in Definition \ref{def:folner_converge}, we have
$$ 0\leq\frac{v^{\out}_n}{e^{\In}_n}\leq\frac{\sum_k|f_n^k|}{|G_n-\partial G_n|}\leq\frac{\sum_k k|f_n^k|}{|G_n-\partial G_n|}\leq\frac{4\,|G(K_n)-G_n|}{|G_n-\partial G_n|} \underset{n\to\infty}{\longrightarrow} 0.$$

We can now complete the proof of the lemma:
$$\lim_{n\to\infty} \frac{e(G_{K_n})}{|G_{K_n}|} = \lim_{n\to\infty}\frac{e^{\In}_n+e^{\out}_n}{|G_{K_n}|}=\lim_{n\to\infty}\frac{e^{\In}_n}{|G_{K_n}|} \quad \text{\rm by 
equation~(\ref{eqn:edensity})}.$$
$$\lim_{n\to\infty}\frac{|G_{K_n}|}{e^{\In}_n}=\lim_{n\to\infty}\frac{v^{\In}_n+v^{\out}_n}{e^{\In}_n}=\lim_{n\to\infty}\frac{v^{\In}_n}{e^{\In}_n} \quad \text{\rm by equation~(\ref{eq:v-out})}.$$
The biperiodicity of $\L$ implies that the final limit is the corresponding ratio for the $\Lambda$--quotient link $L$.
\end{proof}

\subsection*{Spanning trees and dimers} 

For any finite plane graph $G$, let $G^b$ be the balanced bipartite
graph as in Section~\ref{sec:definitions}: After overlaying $G$ and $G^*$, the black vertices of
$G^b$ are the vertices of $G$ and of $G^*$; the white vertices are
points of intersection of their edges.  To make $G^b$ balanced, we
then delete the vertex of $G^*$ corresponding to the unbounded face
and a vertex of $G$ adjacent to the unbounded face, along with all
incident edges to these vertices (see Figure
\ref{fig-spt-dimer}). Euler's formula implies that $G^b$ is a
balanced bipartite graph, and that $|G^b|= 2\,e(G)$.

By \cite{burton-pemantle, propp93}, the spanning trees of $G$ are in
bijection with the dimer coverings of $G^b$; i.e., $\tau(G)=Z(G^b)$.
Hence, for any alternating link $K$, we have
\begin{equation}
\label{eqn-ddd}
\frac{\log \det(K)}{c(K)}=\frac{\log \tau(G_K)}{e(G_K)} = \frac{\log Z(G_K^b)}{c(K)} 
\end{equation}

\begin{figure}[h] 
 \includegraphics[height=1.5in]{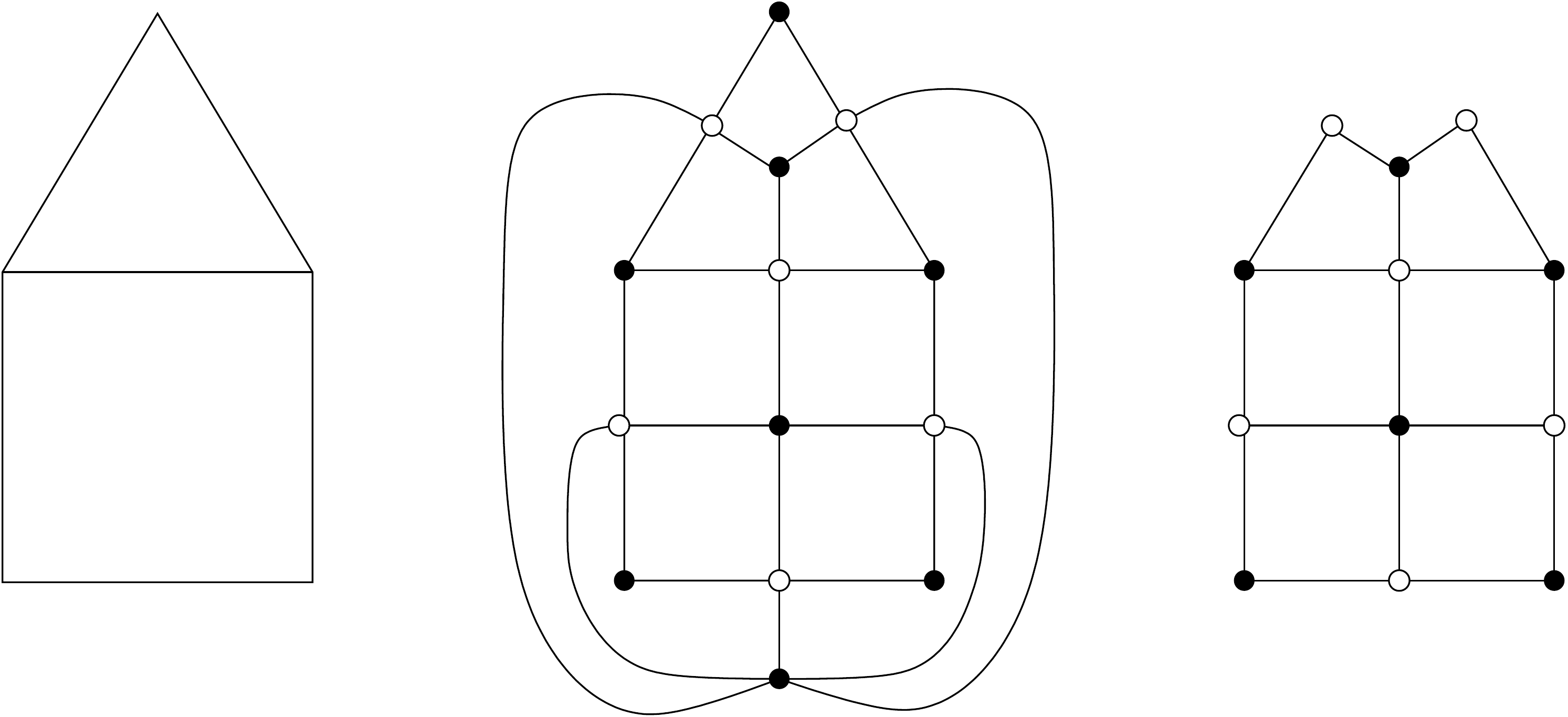}
 \caption{Graph $G$, overlaid graph $G \cup G^*$, balanced
   bipartite graph $G^b$ }
\label{fig-spt-dimer}
\end{figure}

Because $G(\L)$ is biperiodic and 4--valent, the overlaid graph
$G_{\L}^b=G_{\L}\cup G_{\L}^*$ is already a biperiodic balanced
bipartite graph.  So we can consider the toroidal dimer model on
$G_{\L}^b$.  We now relate the entropy of the toroidal dimer model on
$G_{\L}^b$ with the spanning tree entropy of $G_{\L}$.

If $H_n\toF G$, then the spanning tree entropy of $G$ is defined as 
$$ h(G) = \lim_{n\to\infty}\frac{\log\tau(H_n)}{|H_n|}. $$
In \cite{ckp:gmax}, the spanning tree entropy of the infinite square grid was used to prove the determinant density limit in Theorem~\ref{thm:gmax}.
For more details on spanning tree entropy and a broader context, see \cite{lyons}.
If $G$ is a biperiodic planar graph, we define the {\em normalized spanning tree entropy} as
$$ \tilde h(G) = \lim_{n\to\infty}\frac{\log\tau(H_n)}{n^2}.$$
Hence, if $G$ is $\Lambda$--biperiodic and $H_n = G \cap (n\Lambda)$, then $\tilde h(G) = |H_1|\,h(G)$.

\begin{prop}\label{prop:entropy}
Let $G$ be a $\Lambda$--biperiodic planar graph, and let $G^b=G \cup G^*$ be the
overlaid graph, which is a bipartite biperiodic planar graph.  Take
the natural exhaustions of $G^b$ by finite toroidal graphs $G^b_n = G^b/(n\Lambda)$,
and of $G$ by finite planar graphs $H_n = G \cap (n\Lambda)$.  Then as $n\to\infty$,
the entropy of the toroidal dimer model of $G^b$ equals the normalized spanning
tree entropy of $G$; i.e.,
$$ \log Z(G^b) = \lim_{n\to\infty}\frac{\log Z(G^b_n)}{n^2} = \lim_{n\to\infty}\frac{\log\tau(H_n)}{n^2} $$
\end{prop}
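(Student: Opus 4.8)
The plan is to split the two equalities. The first, $\log Z(G^b)=\lim_n \tfrac1{n^2}\log Z(G^b_n)$, is just the definition of the toroidal dimer entropy appearing on the left, and Theorem~\ref{thm:KOS} identifies this common value with $m(p(z,w))$; there is no work there. All the content is in the second equality, $\lim_n\tfrac1{n^2}\log Z(G^b_n)=\lim_n\tfrac1{n^2}\log\tau(H_n)$. I would begin by converting the spanning-tree count into a dimer count: each $H_n=G\cap(n\Lambda)$ is a finite connected plane graph, so the bijection of \cite{burton-pemantle,propp93} recalled above applies to give $\tau(H_n)=Z(H_n^b)$, where $H_n^b$ is the balanced bipartite overlay of $H_n$ (overlay $H_n$ and $H_n^*$, then delete the vertex of $H_n^*$ at the unbounded face together with an adjacent vertex of $H_n$). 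Hence $\lim_n\tfrac1{n^2}\log\tau(H_n)=\lim_n\tfrac1{n^2}\log Z(H_n^b)$, and the proposition reduces to showing that the planar (free-boundary) dimer partition functions $Z(H_n^b)$ and the toroidal (periodic-boundary) partition functions $Z(G^b_n)$ have the same exponential growth rate per fundamental domain.

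For that comparison I would use that $H_n^b$ and $G^b_n$ are both built from the single biperiodic bipartite graph $G^b$ and agree in the bulk. Cutting $G^b_n=G^b/(n\Lambda)$ along two simple closed curves representing a basis of $H_1(T^2)$ yields a finite plane graph that is the $n\times n$ block $G^b\cap(n\Lambda)$ together with a seam, and $H_n^b$ is this same block altered only inside an $O(n)$-sized boundary region (the seam identifications, the bounded-valence edge corrections along $\partial H_n$ as in Lemma~\ref{lemma:Tait_folner}, and the two vertices removed for balance); thus $|G^b_n|$ and $|H_n^b|$ differ by $O(n)$ while both are $\asymp n^2$. I would then run a two-sided Fekete-type patching estimate, in the spirit of the boundary counts in Lemmas~\ref{lemma:Tait_folner} and~\ref{lemma:ratio}: tiling the large torus $G^b/(Nm\Lambda)$ by $N^2$ translated copies of the planar block $G^b\cap(m\Lambda)$ together with a fixed matching on the interstitial collar gives $\log Z\bigl(G^b/(Nm\Lambda)\bigr)\ge N^2\log Z\bigl(G^b\cap(m\Lambda)\bigr)-O(N^2m)$, with the symmetric inequality (and the matching upper bound, using that a collar of $O(N^2m)$ vertices contributes at most $O(N^2m)$ to $\log Z$ by bounded valence) for the free boxes $H_n^b$; letting $N\to\infty$ and then $m\to\infty$ forces $\tfrac1{n^2}\log Z(G^b_n)$ and $\tfrac1{n^2}\log Z(H_n^b)$ to converge to a common limit, the supremum over $m$ of the per-area block entropy. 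All partition functions here are strictly positive and grow at least exponentially in the area ($\tau(H_n)>0$ since $H_n$ is connected, and biperiodic graphs have positive spanning-tree entropy), so the normalizations cause no trouble.

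The step I expect to be the main obstacle is exactly this free-versus-periodic boundary comparison for dimers: in contrast to spanning-tree counts, a dimer partition function is neither a monotone nor a robust function of the underlying graph --- deleting even one edge can send $Z$ to zero --- so one cannot simply add and remove the $O(n)$ seam edges and the balancing vertices and absorb the error, and the patching/subadditivity structure above (equivalently, the existence of a boundary-condition-independent thermodynamic limit for the dimer model) is genuinely needed. An alternative that bypasses the dimer boundary analysis would route through the Laplacian: the matrix form of Temperley's correspondence on the torus identifies the dimer characteristic polynomial $p(z,w)$, up to a monomial, a sign, and the substitution $z,w\mapsto-z,-w$, with the characteristic Laurent polynomial $\Delta_G(z,w)$ of the $\Lambda$-periodic Laplacian of $G$ (checkable on the examples in Section~\ref{sec:examples}: $p=-(4+z+1/z+w+1/w)$ versus $\Delta_{\Z^2}=4-z-1/z-w-1/w$, and similarly for the triaxial link), and then the classical fact that the normalized spanning-tree entropy of a biperiodic planar graph equals $m(\Delta_G)$ and is independent of the F{\o}lner exhaustion used to compute it finishes the argument. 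Either way, assembling the chain gives $\log Z(G^b)=m(p(z,w))=m(\Delta_G(z,w))=\tilde h(G)=\lim_n\tfrac1{n^2}\log\tau(H_n)$.
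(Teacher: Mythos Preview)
Your proposal is correct and considerably more detailed than the paper's own proof. The paper dispatches the proposition in a few sentences by invoking the generalized Temperley bijection of Kenyon, Propp and Wilson \cite{KPW}, asserting a measure-preserving correspondence between (oriented weighted) spanning trees of $H_n$ and dimer coverings of the \emph{toroidal} graph $G^b_n$ directly, and then passing to the limit. This is brief to the point of being informal: for finite $n$ the toroidal dimer model on $G^b_n$ corresponds to cycle-rooted spanning forests on the torus, not to ordinary spanning trees of the planar chunk $H_n$, so an exact equality $Z(G^b_n)=\tau(H_n)$ is not available, and the paper acknowledges this by saying the result ``holds for large planar graphs'' with the infinite entropy obtained as a limit. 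Your two routes are precisely the ways to make this honest. Your Laplacian alternative --- identifying $p(z,w)$ with $\Delta_G(z,w)$ up to harmless monomial and sign factors, and then invoking $\tilde h(G)=m(\Delta_G)$ --- is essentially the algebraic content underlying the KPW citation, unpacked; it is the cleaner of your two options and closest in spirit to what the paper intends. Your first route, the free-versus-periodic boundary comparison via patching/subadditivity, is a genuinely different argument that sidesteps the algebraic identification entirely; it works, but as you correctly flag, the non-monotonicity of dimer partition functions under edge deletion makes the collar estimates more delicate than the analogous spanning-tree argument, and one really does need the thermodynamic-limit machinery rather than naive surgery.
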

\begin{proof}
Temperley's bijection between spanning trees and dimers on the square
grid was extended by Burton and Pemantle \cite{burton-pemantle} to
general planar graphs, and further extended by Kenyon, Propp and
Wilson \cite{KPW} to directed weighted planar graphs.  Their main
result is that there is a measure-preserving bijection between
oriented weighted spanning trees of $H_n$ and dimer coverings on
$G^b_n$.  Hence, the claim follows in principle from \cite{KPW} which,
although not stated for infinite planar graphs, holds for large planar
graphs. The entropy in the infinite case can be obtained from a limit
of large planar graphs.
\footnote{We thank Richard Kenyon for this observation.}
\end{proof}

The following proposition shows that spanning tree entropy is not affected by peripheral changes in the graphs that converge as we defined above.
\begin{define}\cite[p. 498]{lyons}\label{def:tight}
Given $R>0$ and a finite graph $G$, let $\E_R(G)$ be the distribution
of the number of edges in the ball of radius $R$ about a random vertex
of $G$.  The sequence of graphs $G_n$ is {\em tight} if for
each $R$, the sequence of corresponding distributions $\E_R(G_n)$ is
tight.  In other words, for each $R>0$, 
$$ \limsup_{t\to\infty}\limsup_{n\to\infty}\,{\mathbb P}(\E_R(G_n)>t)=0. $$
\end{define}

\begin{prop}\cite[Corollary~3.8]{lyons}\label{prop:lyons}
Let $G_n$ be any tight sequence of finite connected graphs with bounded average degree such that $\displaystyle \lim_{n\to\infty}\frac{\log\tau(G_n)}{|G_n|} = h$.
If $H_n$ is a sequence of connected subgraphs of $G_n$ such that 
\begin{equation}\label{eq:lyons}
\lim_{n\to\infty}\frac{\# \{x \in V(H_n):\; \deg_{H_n}(x)=\deg_{G_n}(x)\} }{|G_n|} = 1,
\end{equation}
then \ 
$\displaystyle \lim_{n\to\infty}\frac{\log\tau(H_n)}{|H_n|} = h$.
\end{prop}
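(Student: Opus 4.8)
The plan is to deduce the statement from the continuity of spanning-tree entropy along locally weakly convergent (Benjamini--Schramm) sequences of graphs, after checking that $H_n$ and $G_n$ have the \emph{same} local weak limit. First note that the set counted in the numerator of \eqref{eq:lyons} has at most $|H_n|\le|G_n|$ elements, so the hypothesis forces $|H_n|/|G_n|\to 1$; in particular the uniform laws on $V(H_n)$ and on $V(G_n)$ differ by $o(1)$ in total variation on the common vertex set, so it does not matter which one we use to sample roots. Moreover $H_n$ inherits both standing hypotheses: since $H_n\subseteq G_n$, the number of edges in the radius-$R$ ball of a vertex is no larger in $H_n$ than in $G_n$, so (up to the $o(1)$ change of root law) tightness of $\bigl(\E_R(G_n)\bigr)_n$ gives tightness of $\bigl(\E_R(H_n)\bigr)_n$; and $2e(H_n)/|H_n|\le\bigl(2e(G_n)/|G_n|\bigr)\bigl(|G_n|/|H_n|\bigr)$ stays bounded.

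The heart of the argument is to show that $G_n$ and $H_n$ have the same local weak limits. Set $B_n=\bigl(V(G_n)\setminus V(H_n)\bigr)\cup\{x\in V(H_n):\deg_{H_n}(x)\ne\deg_{G_n}(x)\}$; by \eqref{eq:lyons}, $|B_n|=o(|G_n|)$, and every edge of $G_n$ absent from $H_n$ is incident to $B_n$. Hence, if no vertex of $B_n$ lies within $G_n$-distance $2R$ of $x$, then the rooted radius-$R$ balls of $x$ in $H_n$ and in $G_n$ coincide. The key claim, and the only place tightness is used, is that for each fixed $R$ the proportion of vertices within $G_n$-distance $2R$ of $B_n$ tends to $0$: given a threshold $t$, tightness supplies $\varepsilon(t)\to 0$ (uniform in $n$) so that all but an $\varepsilon(t)$-fraction of vertices have at most $t$ edges in their radius-$(2R+1)$ ball; every vertex within distance $2R$ of such a ``light'' vertex then has degree at most $t$, so each $b\in B_n$ is within distance $2R$ of at most $1+t+\dots+t^{2R}$ light vertices, and the set of light vertices within distance $2R$ of $B_n$ has size $O_{t,R}(|B_n|)=o(|G_n|)$; letting $n\to\infty$ and then $t\to\infty$ gives the claim. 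By tightness, subsequential local weak limits of $G_n$ exist, and along any subsequence on which $G_n$ converges to a unimodular random rooted graph $(\rho,o)$, the radius-$R$ ball statistics of $H_n$ (with uniform root on $V(H_n)$) converge to the same limits for every $R$ by what precedes, so $H_n$ converges to the same $(\rho,o)$.

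Finally I would invoke the main result of \cite{lyons}: for a sequence obeying the standing hypotheses and converging locally weakly to $(\rho,o)$, the normalized logarithm $\log\tau(\cdot)/|\cdot|$ converges to a quantity $\theta(\rho,o)$ depending only on the limit. Now argue along subsequences: given any subsequence, pass to a further one along which $G_n$ converges locally weakly to some $(\rho,o)$; then $\log\tau(G_n)/|G_n|\to\theta(\rho,o)$, while the full sequence tends to $h$, so $\theta(\rho,o)=h$; by the previous paragraph this sub-subsequence of $H_n$ converges to the same $(\rho,o)$, so $\log\tau(H_n)/|H_n|\to\theta(\rho,o)=h$. Since every subsequence admits a further subsequence along which $\log\tau(H_n)/|H_n|\to h$, the full sequence converges to $h$. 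The principal obstacle is the cited continuity of tree entropy itself: via the Matrix--Tree theorem $\log\tau(G)=\sum_{\lambda>0}\log\lambda-\log|G|$, summing over the nonzero Laplacian eigenvalues $\lambda$, and the delicate step --- showing that the near-zero part of the spectral measure is negligible and varies continuously along the sequence, which is exactly where tightness and bounded average degree are needed --- is the substance of \cite{lyons}; in this plan I treat it as a black box.
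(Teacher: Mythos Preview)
The paper does not prove this proposition at all: it is quoted verbatim as \cite[Corollary~3.8]{lyons} and used as a black box in the proof of Theorem~\ref{thm:main}. So there is no ``paper's own proof'' to compare against.

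That said, your sketch is essentially correct, and it is precisely the route Lyons takes to deduce his Corollary~3.8 from his main theorem: show that condition~\eqref{eq:lyons} forces $|H_n|/|G_n|\to 1$, that $H_n$ inherits tightness and bounded average degree, and that the Benjamini--Schramm limits of $(G_n)$ and $(H_n)$ along any convergent subsequence coincide because the balls differ only near the asymptotically negligible set $B_n$. Then the continuity of tree entropy under local weak convergence (the substance of \cite{lyons}, which you correctly identify as the nontrivial input and treat as a black box) finishes the argument via sub-subsequences.

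One small remark on the counting step you flagged: your bound ``each $b\in B_n$ is within distance $2R$ of at most $1+t+\dots+t^{2R}$ light vertices'' does hold, but the justification needs a word more than ``every vertex within distance $2R$ of a light vertex has degree at most $t$.'' The clean way to see it is to restrict to the subgraph induced on vertices that lie within distance $2R$ of \emph{some} light vertex; in that subgraph every vertex has degree at most $t$, and whenever $L_b\neq\emptyset$ the vertex $b$ itself belongs to this subgraph and every light $x$ with $d(x,b)\le 2R$ is reached from $b$ by a path of length $\le 2R$ lying entirely in this subgraph, giving the stated bound. With that clarification the argument goes through.
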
 

\begin{proof}[{\bf Proof of Theorem \ref{thm:main}}]
By Lemma~\ref{lemma:Tait_folner}, $\displaystyle K_n\toF \L \ \Longrightarrow \ G_{K_n} \toF G_\L$.
Let $H_n\subset G_{K_n}$ form the toroidal F{\o}lner sequence for
$G_\L$, with the planar graphs $H_n\subset G_\L\cap(n\Lambda)$.
However, since $H_n$ is also exhaustive, we may assume $H_n =
G_\L\cap(n\Lambda)$; Proposition~\ref{prop:lyons} implies that the
spanning tree entropies are equal in either case.
The biperiodicity of $\L$ implies
\begin{equation}\label{eq:Hn}
\lim_{n\to\infty}\frac{|H_n|}{n^2\,|G_L|} = 1.
\end{equation}

In order to apply Proposition~\ref{prop:lyons} for the graphs
$H_n\subset G_{K_n}$, we now verify the required conditions.
Lemma~\ref{lemma:ratio} implies that $G_{K_n}$ have bounded average
degree.  Since 
 $H_n \subset G_\L$ which is biperiodic, 
for any $R>0$ there exists sufficiently large $t>0$ such that 
${\mathbb P}(\E_R(H_n)>t)=0$. Hence, 
the conditions in Definition~\ref{def:folner_converge_graphs} applied to
$H_n\subset G_{K_n}\toF G_\L$ imply that for any $R>0$ and sufficiently large $t>0$,
$$ 0\leq {\mathbb P}(\E_R(G_{K_n})>t)\leq \frac{|G_{K_n}-H_n|}{|G_{K_n}|} 
\underset{n\to \infty}{\longrightarrow} 0.$$
Thus, $G_{K_n}$ are a tight sequence of graphs.
Finally, the conditions in Definition~\ref{def:folner_converge_graphs} imply the limit (\ref{eq:lyons}).
Thus, by Proposition~\ref{prop:lyons} the spanning tree entropies for $H_n$ and $G_{K_n}$ are equal.

Let $G^b_n=G^b_\L/(n\Lambda)$, which is a balanced bipartite toroidal
graph.  Recall $e(G_K)=c(K)$.  By the results above, we have
\begin{eqnarray*}
\lim_{n\to\infty}\frac{\log\det(K_n)}{c(K_n)} & = & \lim_{n\to\infty}\frac{\log\tau(G_{K_n})}{e(G_{K_n})} \\ \\
 & = &  \lim_{n\to\infty}\frac{|G_{K_n}|}{e(G_{K_n})}\cdot\frac{\log\tau(G_{K_n})}{|G_{K_n}|} \\ \\
 & = &  \frac{|G_L|}{e(G_L)}\ \lim_{n\to\infty}\frac{\log\tau(G_{K_n})}{|G_{K_n}|} \hspace*{0.3in} \text{(by Lemma~\ref{lemma:ratio})} \\ \\
 & = &  \frac{|G_L|}{e(G_L)}\ \lim_{n\to\infty}\frac{\log\tau(H_n)}{|H_n|} \hspace*{0.4in} \text{(by Proposition~\ref{prop:lyons})} \\ \\
 & = &  \frac{1}{e(G_L)}\ \lim_{n\to\infty}\frac{\log\tau(H_n)}{n^2}\cdot \frac{n^2\,|G_L|}{|H_n|}  \\ \\
 & = &  \frac{1}{e(G_L)}\ \lim_{n\to\infty}\frac{\log\tau(H_n)}{n^2} \hspace*{0.4in} \text{(by equation~(\ref{eq:Hn}))} \\ \\
 & = &  \frac{1}{e(G_L)}\ \lim_{n\to\infty}\frac{\log Z(G^b_n)}{n^2} \hspace*{0.38in} \text{(by Proposition~\ref{prop:entropy})} \\ \\
 & = & \frac{m(p(z,w))}{c(L)} \hspace*{1.2in} \text{(by Theorem~\ref{thm:KOS})}. 
\end{eqnarray*}
\end{proof}

\subsection*{Acknowledgements}
We thank Richard Kenyon, Jessica Purcell, Neal Stoltzfus and
B\'eatrice de Tili\`ere for useful conversations.  We thank the
anonymous referee for careful revisions and thoughtful suggestions.
The authors acknowledge support by the Simons Foundation and PSC-CUNY.
The first author also thanks the Columbia University Mathematics
Department for its hospitality during his sabbatical leave.

\bibliographystyle{amsplain}
\bibliography{references}

\providecommand{\bysame}{\leavevmode\hbox to3em{\hrulefill}\thinspace}
\providecommand{\MR}{\relax\ifhmode\unskip\space\fi MR }
\providecommand{\MRhref}[2]{%
  \href{http://www.ams.org/mathscinet-getitem?mr=#1}{#2}
}
\providecommand{\href}[2]{#2}
\begin{thebibliography}{10}

\bibitem{Adams:crossings}
Colin Adams, Thomas Fleming, Michael Levin, and Ari~M. Turner, \emph{Crossing
  number of alternating knots in {$S\times I$}}, Pacific J. Math. \textbf{203}
  (2002), no.~1, 1--22.

\bibitem{BoydRV2002}
D.~Boyd and F.~Rodriguez-Villegas, \emph{Mahler's measure and the dilogarithm.
  {I}}, Canad. J. Math. \textbf{54} (2002), no.~3, 468--492.

\bibitem{Boyd-specialvalues}
David~W. Boyd, \emph{Mahler's measure and special values of {$L$}-functions},
  Experiment. Math. \textbf{7} (1998), no.~1, 37--82. \MR{1618282 (99d:11070)}

\bibitem{boyd-manifolds}
\bysame, \emph{Mahler's measure and invariants of hyperbolic manifolds}, Number
  theory for the millennium, {I} ({U}rbana, {IL}, 2000), A K Peters, Natick,
  MA, 2002, pp.~127--143. \MR{1956222}

\bibitem{burton-pemantle}
Robert Burton and Robin Pemantle, \emph{Local characteristics, entropy and
  limit theorems for spanning trees and domino tilings via
  transfer-impedances}, Ann. Probab. \textbf{21} (1993), no.~3, 1329--1371.
  \MR{1235419 (94m:60019)}

\bibitem{ckp:gmax}
Abhijit Champanerkar, Ilya Kofman, and Jessica~S. Purcell, \emph{Geometrically
  and diagrammatically maximal knots}, {\rm to appear in} J. London Math. Soc. 
(arXiv:1411.7915 [math.GT], 2015).

\bibitem{ckp:weaving}
\bysame, \emph{Volume bounds for weaving knots}, {\rm to appear in} Algebr.
  Geom. Topol. (arXiv:1506.04139 [math.GT], 2015).

\bibitem{ckp:bal}
\bysame, \emph{{Geometry of semi-regular biperiodic alternating links}}, in
  preparation.

\bibitem{cimasoni-survey}
David Cimasoni, \emph{The geometry of dimer models}, arXiv:1409.4631 [math-ph],
  2014.

\bibitem{cdr-dimers}
Moshe Cohen, Oliver~T. Dasbach, and Heather~M. Russell, \emph{A twisted dimer
  model for knots}, Fund. Math. \textbf{225} (2014), no.~1, 57--74.
  \MR{3205565}

\bibitem{crowell}
Richard~H. Crowell, \emph{Nonalternating links}, Illinois J. Math. \textbf{3}
  (1959), 101--120. \MR{0099667 (20 \#6105)}

\bibitem{KOS}
R.~Kenyon, A.~Okounkov, and S.~Sheffield, \emph{Dimers and amoebae}, Ann. of
  Math. (2) \textbf{163} (2006), no.~3, 1019--1056.

\bibitem{kenyon-survey}
Richard Kenyon, \emph{Lectures on dimers}, Statistical mechanics, IAS/Park City
  Math. Ser., vol.~16, Amer. Math. Soc., Providence, RI, 2009, pp.~191--230.
  \MR{2523460 (2010j:82023)}

\bibitem{KPW}
Richard~W. Kenyon, James~G. Propp, and David~B. Wilson, \emph{Trees and
  matchings}, Electron. J. Combin. \textbf{7} (2000), Research Paper 25, 34 pp.
  (electronic).

\bibitem{lickorish}
W.~B.~Raymond Lickorish, \emph{An introduction to knot theory}, Graduate Texts
  in Mathematics, vol. 175, Springer-Verlag, New York, 1997. \MR{1472978}

\bibitem{lyons}
Russell Lyons, \emph{Asymptotic enumeration of spanning trees}, Combin. Probab.
  Comput. \textbf{14} (2005), no.~4, 491--522. \MR{2160416 (2006j:05048)}

\bibitem{menasco-thist:alternating}
William Menasco and Morwen Thistlethwaite, \emph{The classification of
  alternating links}, Ann. of Math. (2) \textbf{138} (1993), no.~1, 113--171.
  \MR{1230928 (95g:57015)}

\bibitem{propp93}
James Propp, \emph{Lattice structure for orientations of graphs},
  arXiv:math/0209005v1 [math.CO].

\bibitem{sw-spanningtrees}
Daniel Silver and Susan Williams, \emph{Spanning trees and {M}ahler measure},
  arXiv:1602.02797 [math.GT], 2016.

\bibitem{smyth-mm-several}
C.~J. Smyth, \emph{On measures of polynomials in several variables}, Bull.
  Austral. Math. Soc. \textbf{23} (1981), no.~1, 49--63. \MR{615132}

\bibitem{smyth-survey}
Chris Smyth, \emph{The {M}ahler measure of algebraic numbers: a survey}, Number
  theory and polynomials, London Math. Soc. Lecture Note Ser., vol. 352,
  Cambridge Univ. Press, Cambridge, 2008, pp.~322--349. \MR{2428530}

\bibitem{RV-modular}
F.~Rodriguez Villegas, \emph{Modular {M}ahler measures. {I}}, Topics in number
  theory ({U}niversity {P}ark, {PA}, 1997), Math. Appl., vol. 467, Kluwer Acad.
  Publ., Dordrecht, 1999, pp.~17--48. \MR{1691309 (2000e:11085)}

\bibitem{flype_fig}
Wikipedia, https://commons.wikimedia.org/wiki/File:Flype.svg.

\end{thebibliography}
\end{document}